 \pgfplotsset{compat=1.14}
\newcommand*{\rom}[1]{\expandafter\@slowromancap\romannumeral #1@}
\newcommand{\gettikzxy}[3]{%
  \tikz@scan@one@point\pgfutil@firstofone#1\relax
  \edef#2{\the\pgf@x}%
  \edef#3{\the\pgf@y}%
}
\def\namedlabel#1#2{\begingroup
	#2%
	\def\@currentlabel{#2}%
	\phantomsection\label{#1}\endgroup
}
\newcommand{\sA}{\mathcal A}
\newcommand{\sB}{\mathcal B}
\newcommand{\sC}{\mathcal P}
\newcommand{\sD}{\mathcal D}
\newcommand{\sI}{\mathcal I}
\newcommand{\sM}{\mathcal M}
\newcommand{\sP}{\mathcal P}
\newcommand{\R}{\mathbb R}
\newcommand{\E}{\mathbb E}
\newcommand{\arginf}{\mbox{arginf}}
\newtheorem{thm}{Theorem}
\newtheorem{prop}{Proposition}
\newtheorem{eg}{Example}
\newtheorem{lem}{Lemma}
\newtheorem{cor}{Corollary}
\newtheorem{rem}{Remark}
\newtheorem{defn}{Definition}
\begin{document}
\title{The potential of the shadow measure}
\author{
Mathias Beiglb\"{o}ck\thanks{Fakult\"{a}t f\"{u}r Mathematik, Universit\"{a}t Wien \textit{mathias.beiglboeck@univie.ac.at}}\hspace{10mm}
David Hobson\thanks{Department of Statistics, University of Warwick \textit{d.hobson@warwick.ac.uk}} \hspace{10mm}
Dominykas Norgilas\thanks{Department of Mathematics, University of Michigan \textit{dnorgila@umich.edu}}
}
\date{\today}
\maketitle

\begin{abstract}
It is well known that given two probability measures $\mu$ and $\nu$ on $\R$ in convex order there exists a discrete-time martingale with these marginals. Several solutions are known (for example from the literature on the Skorokhod embedding problem in Brownian motion). But, if we add a requirement that the martingale should minimise the expected value of some functional of its starting and finishing positions then the problem becomes more difficult. Beiglb\"{o}ck and Juillet (Ann. Probab. 44 (2016) 42–106) introduced the \emph{shadow} measure which induces a family of martingale couplings, and solves the optimal martingale transport problem for a class of bivariate objective functions. In this article we extend their (existence and uniqueness) results by providing an explicit construction of the shadow measure and, as an application, give a simple proof of its associativity. \\

\indent Keywords: couplings, martingales, peacocks, convex order, optimal transport.\\
\indent 2020 Mathematics Subject Classification:  60G42.
\end{abstract}

\section{Introduction}
Given two probability measures $\mu,\nu$ on $\R$, a \textit{transport plan} or \textit{coupling} between $\mu$ and $\nu$ is a probability measure $\pi$ on $\R^2$ such that
$\pi(A\times\R)=\mu(A)$ and $\pi(\R\times B)=\nu(B)$
for all Borel sets $A,B$ of $\R$. It is often convenient to express a coupling $\pi$ via its disintegration with respect to the first marginal $\mu$,
$
\pi(dx,dy)=\mu(dx)\pi_x(dy)
$
where $x\mapsto\pi_x$ is a $\mu$-almost surely unique probability kernel. In the language of the classical optimal transport, each transport plan $\pi$ corresponds to a joint distribution of $X\sim\mu$ and $Y\sim\nu$ and then it is natural to ask for couplings which, (not only possess nice structures or properties but also) for a given cost function $c:\R\to\R$, minimise the expected cost $\E^\pi[c(X,Y)]$. Brenier's Theorem (see Brenier \cite{Brenier:87}, and R\"{u}schendorf and Rachev \cite{RuRach:90}) considers the problem in $\R^d$ with an  Euclidean cost $c(x,y)=\lvert y-x\lvert^2$. Then, under some regularity assumptions, the optimal coupling is a push forward measure induced by the gradient of a convex function $\phi:\R^d\mapsto\R$. In other words, the optimal coupling $\hat\pi$ is \textit{deterministic} and of the form
$$
\hat\pi(dx,dy)=\mu(dx)\delta_{\nabla\phi(x)}(dy).
$$
In one dimension, this says that $\hat\pi$ is concentrated on a graph of an increasing function. Furthermore, it is optimal for (at least) costs $c(x,y)=h(y-x)$, where $h:\R\mapsto\R$ is strictly convex, and coincides with the monotone (Hoeffding-Fr\'{e}chet) coupling $\pi^{HF}$ (which is often called the \textit{quantile} coupling).

In the martingale version of optimal transport, introduced in the context of a specific cost function by Hobson and Neuberger \cite{HobsonNeuberger:12} and more generally by Beiglb\"{o}ck et al. \cite{BeiglbockHenryLaborderePenkner:13} and Galichon et al. \cite{GalichonHenryLabordereTouzi:14}, the goal is to construct a martingale $(M_i)_{i=1,2}$ with $M_1\sim\mu$ and $M_2\sim\nu$, and such that $\E[c(M_1,M_2)]$ is minimised. The martingale requirement places a non-trivial constraint on the possible joint distributions of $M_1$ and $M_2$. In this setting the martingale transports correspond to measures $\pi$ on $\R^2$ with univariate marginals $\mu$ and $\nu$, and such that the kernel in the disintegration of $\pi$ satisfies the barycenter property
$$
\int y\pi_x(dy)=x,\quad\textrm{for }\mu\textrm{-almost every }x\in\R.
$$
Unlike in the classical setting, for arbitrary probability measures $\mu$ and $\nu$ the existence of a martingale transport is not guaranteed and requires an additional condition that $\mu$ is smaller than $\nu$ in convex order, which we will write as $\mu\leq_{cx}\nu$. That this condition is necessary and sufficient for there to exist a martingale with given marginals was proved by Strassen \cite{Strassen:65} (and extended to a level of continuous time processes by Kellerer \cite{Kell:72,Kell:73}).

Provided that $\mu\leq_{cx}\nu$, one then seeks to find optimal martingale couplings. For quadratic costs, however, the martingale transport problem is trivial. In particular, any martingale coupling is an optimiser. Solutions are known for several other specific but important costs. Hobson and Neuberger \cite{HobsonNeuberger:12} and Hobson and Klimmek \cite{HobsonKlimmek:15}, in the context of mathematical finance, provide the (non-explicit and explicit, respectively) constructions of the optimal martingale couplings $\pi^{HN}$ and $\pi^{HK}$ for the cost functions $c(x,y)=-\lvert x-y\lvert$ and $c(x,y)=\lvert x-y \lvert$, respectively. (Hobson and Klimmek~\cite{HobsonKlimmek:15} work under the dispersion assumption whereby $\mu \geq \nu$ on an interval $I$ and $\mu \leq \nu$ outside $I$.) Beiglb\"{o}ck and Juillet \cite{BeiglbockJuillet:16} introduced the so-called left-curtain coupling $\pi^{lc}$ (a martingale counterpart of the quantile coupling in the classical optimal transport) and proved its optimality for costs of the form $c(x,y)=h(y-x)$ for some differentiable function $h$ with strictly convex derivative.

All of the aforementioned martingale couplings have nice structural properties: if $\mu$ is atom-free then card(spt$\{\pi_x^{HN}\}$)$\leq2$, card(spt$\{\pi_x^{HK}\}$)$\leq3$ with card(spt$\{\pi_x^{HK}\}\setminus\{x\}$)$\leq2$ and card(spt$\{\pi_x^{lc}\}$)$\leq2$ for $\mu$-almost every $x\in\R$. In particular, in each case there exist lower and upper functions on which the couplings are concentrated. Under the dispersion assumption, Hobson and Klimmek \cite{HobsonKlimmek:15} constructed the upper and lower functions for $\pi^{HK}$, while for $\pi^{HN}$ only the existence is known. When the initial law $\mu$ is atomic, an explicit construction of the characteristic functions of the left-curtain coupling is provided in Beiglb\"{o}ck and Juillet \cite{BeiglbockJuillet:16}. Another construction of $\pi^{lc}$ (using ordinary differential equations) is given by Henry-Labord\`{e}re and Touzi \cite{HenryLabordereTouzi:16} for atomless initial measures $\mu$. For general initial and target laws Hobson and Norgilas \cite{HobsonNorgilas:18} constructed the upper and lower functions that characterise the generalised (or \textit{lifted}) left-curtain martingale coupling  using weak approximation of measures. Several other authors further investigate the properties and extensions of the left-curtain coupling, see Beiglb\"{o}ck et al. \cite{BeiglbockHenryLabordereTouzi:17,BeiglbockCox:17}, Juillet \cite{Juillet:16}, Nutz et al. \cite{NutzStebegg:18,NutzStebeggTan:17}, Campi et al. \cite{Campi:17}. In the case of a continuum of marginals which are increasing in convex order, Henry-Labord\`{e}re et al.~\cite{HenryLabordereTanTouzi:16}, Juillet~\cite{Juillet:18} and Br\"{u}ckerhoff at al.~\cite{BHJ:20} recently showed, amongst other things, how using the left-curtain coupling one can construct a martingale that fits given marginals at any given time and solves a continuous-time version of the martingale optimal transport problem.

To study the transport plans in the martingale setting, Beiglb\"{o}ck and Juillet \cite{BeiglbockJuillet:16} introduced the \textit{extended} convex order of two measures. Its significance lies in the fact that, for each pair of measures $\mu,\nu$ with $\mu$ less than $\nu$ in the extended convex order, there exists a martingale that transports $\mu$ into $\nu$ (without necessarily covering all of $\nu$). In particular, the set of measures $\eta$ with  $\mu\leq_{cx}\eta \leq \nu$, is non-empty. Each such $\eta$ corresponds to a terminal law of a martingale that embeds $\mu$ into $\nu$. Among these terminal laws there are at least two canonical choices, namely, the smallest and the largest element with respect to the convex order, which correspond to the most concentrated and the most disperse attainable terminal law of the transporting martingale, respectively. Beiglb\"{o}ck and Juillet \cite{BeiglbockJuillet:16} proved the existence and uniqueness of both of these extreme measures and baptised the minimal measure, which is the main object of interest in this paper, the \textit{shadow} (of $\mu$ in $\nu$). One of the main achievements of this paper is \cref{thm:shadow_potential}, which provides an explicit construction of the shadow measure. This allows us to give a simple proof of the existence and uniqueness which is considerably more direct than that given in \cite{BeiglbockJuillet:16}.

Arguably the most important structural property of the shadow measure is its associativity (see \cref{thm:shadow_assoc}).  In particular, it is the main ingredient in defining the \textit{left-curtain} martingale coupling, or more generally, any martingale coupling induced by the shadow measure. More specifically, the left-curtain martingale coupling is defined as the unique measure $\pi^{lc}$ on $\R^2$ such that, for each $x\in\R$, $\pi^{lc}\lvert_{(-\infty,x])\times\R}$ has the first marginal $\mu\lvert_{(-\infty,x]}$ and the second marginal $S^\nu(\mu\lvert_{(-\infty,x]})$, where $S^\nu(\mu\lvert_{(-\infty,x]})$ is the shadow of $\mu\lvert_{(-\infty,x]}$ in $\nu$. Thus $\pi^{lc}$ can be viewed as one extreme of the family of martingale transports constructed using the shadow measure, which corresponds to a \textit{horizontal} parametrisation $(\mu\lvert_{(-\infty,x]})_{x\in\R}$ of the initial measure $\mu$. Another such parametrisation, which corresponds to the (lifted) \textit{middle-curtain} ($\pi^{mc}$) coupling, is given by $(S^\mu(u\delta_{\overline\mu}))_{u\in[0,1])}$, where $\overline\mu=\int x\mu(dx)$. At the other end of the spectrum using the \textit{vertical} parametrisation $(u\mu)_{u\in[0,1]}$,  Beiglb\"{o}ck and Juillet \cite{BeiglbockJuillet:16s} obtained a rather different (lifted) canonical transport plan, namely, the \textit{sunset} ($\pi^{sun}$) coupling.

 In general, lifted couplings do not appear as the optimisers in optimal transport problems with martingale constraints (e.g., the sunset coupling $\pi^{sun}$ can be seen as the martingale analogue of the product coupling $\mu\otimes\nu$ in classical optimal transport). However, they are optimal in the general (or \textit{weak}) formulation of the transportation problem recently introduced by Gozlan et al. \cite{Gozlan:17} (see also Gozlan and Juillet \cite{Gozlan:18} and Backhoﬀ-Veraguas et al. \cite{Backhoff:19}). A further important property enjoyed by $\pi^{sun}$ is \textit{Lipschitz-Markovianity}, which is the main ingredient in all the proofs (to the best of our knowledge) of Kellerer's Theorem \cite{Kell:72} on the existence of Markov martingales with given marginals.

The current paper is structured as follows. In \cref{sec:prelims} we discuss the relevant notions of probability measures and (extended) convex order, and prove some crucial (for our main theorems) results regarding the convex hull of a function. In \cref{sec:max} we use potential-geometric arguments to explicitly construct the maximal measure with respect to convex order (the opposite of the shadow measure). \cref{sec:shadow} is dedicated to our main results. First, in \cref{thm:shadow_potential} we provide an explicit construction of the shadow measure in terms of its potential function. Then we use this result to give a simplified proof of the associativity of the shadow, see \cref{thm:shadow_assoc}.

\section{Preliminaries}\label{sec:prelims}
\subsection{Measures and Convex order}
\label{prelims:convex}
Let $\sM$ (respectively $\sP$) be the set of measures (respectively probability measures) on $\R$ with finite total mass and finite first moment, i.e., if $\eta\in\sM$, then $\eta(\R)<\infty$ and $\int_\R\lvert x\lvert\eta(dx)<\infty$. Given a measure $\eta\in\sM$ (not necessarily a probability measure), define $\overline{\eta} = \int_\R x \eta(dx)$ to be the first moment of $\eta$ (and then $\overline{\eta}/\eta(\R)$ is the barycentre of $\eta$). Let $\sI_\eta$ be the smallest interval containing the support of $\eta$, and let $\{ \ell_\eta, r_\eta \}$ be the endpoints of $\sI_\eta$. If $\eta$ has an atom at $\ell_\eta$ then $\ell_\eta$ is included in $\sI_\eta$, and otherwise it is excluded, and similarly for $r_\eta$. 

For $\alpha \geq 0$ and $\beta \in \R$ let $\sD(\alpha, \beta)$ denote the set of increasing, convex functions $f:\R \mapsto \R_+$ such that
\[ \lim_{ z \downarrow -\infty}  \{ f(z) \} =  0, \hspace{10mm} \lim_{z \uparrow \infty} \{ f(z) - (\alpha z- \beta) \}   =0. \]
Then, when $\alpha = 0$, $\sD(0,\beta)$ is empty unless $\beta = 0$ and then $\sD(0,0)$
contains one element, the zero function.

For $\eta\in\sM$, define the functions $P_\eta,C_\eta : \R \mapsto \R^+$ by
\begin{equation*}
P_\eta(k) := \int_{\R} (k-x)^+ \eta(dx),\quad k\in\R,
\end{equation*}
and
\begin{equation*}
C_\eta(k) := \int_{\R} (x-k)^+ \eta(dx),\quad k\in\R,
\end{equation*}
respectively. Then $P_\eta(k) \geq 0 \vee  (\eta(\R) k - \overline{\eta} )$ and $C_\eta(k) \geq 0 \vee (\overline{\eta} - \eta(\R)k)$. Also $C_\eta(k) - P_\eta(k) = (\overline{\eta}- \eta(\R)k)$.

The following properties of $P_\eta$ can be found in Chacon~\cite{Chacon:77}, and Chacon and Walsh~\cite{ChaconWalsh:76}: $P_\eta \in \sD(\eta(\R), \overline{\eta})$ and $\{k : P_{\eta}(k) > (\eta(\R)k - \overline{\eta})^+  \} =  \{k : C_{\eta}(k) > (\overline{\eta}- \eta(\R)k)^+\} =(\ell_\eta,r_\eta)$. Conversely (see, for example, Proposition 2.1 in Hirsch et al. \cite{peacock}),  if $h$ is a non-negative, non-decreasing and convex function with $h \in \sD(k_m,k_f)$ for some numbers $k_m \geq 0$ and $k_f\in\R$ (with $k_f = 0$ if $k_m=0$), then there exists a unique measure $\eta\in\sM$, with total mass $\eta(\R)=k_m$ and first moment $\overline{\eta}=k_f$, such that $h=P_{\eta}$. In particular, $\eta$ is uniquely identified by the second derivative of $h$ in the sense of distributions. Furthermore, $P_\eta$ and $C_\eta$ are related to the potential $U_\eta$, defined by
\begin{equation*}
U_\eta(k) : =  - \int_{\R} |k-x| \eta(dx),\quad k\in\R,
\end{equation*}
by $-U_\eta=C_\eta+P_\eta$. We will call $P_\eta$ (and $C_\eta$) a modified potential. Finally note that all three second derivatives $C^{\prime\prime}_{\eta},P^{\prime\prime}_{\eta}$ and $-U_\eta^{\prime\prime}/2$ identify the same underlying measure  $\eta$.

For $\eta,\chi\in\sM$, let
$$
\sC(\eta,\chi):=\{\tilde{P} \in \sD( \eta(\R), \overline{\eta}):P_\chi-\tilde{P}\textrm{ is convex and }P_{\eta}\leq \tilde P\}.
$$

For $\eta,\chi\in\sM$, we write $\eta\leq\chi$ if $\eta(A) \leq \chi(A)$ for all Borel measurable subsets $A$ of $\R$, or equivalently if
\begin{equation*}
\int fd\eta\leq\int fd\chi,\quad \textrm{for all non-negative }f:\R\mapsto\R_+.
\end{equation*}
Since $\eta$ and $\chi$ can be identified as second derivatives of $P_\chi$ and $P_\eta$ respectively, we have $\eta\leq\chi$ if and only if $P_\chi-P_\eta$ is convex, i.e., $P_\eta$ has a smaller curvature than $P_\chi$.

Two measures $\eta,\chi\in\sM$ are in convex order, and we write $\eta \leq_{cx} \chi$, if
\begin{equation}\label{eq:cx}
\int fd\eta\leq\int fd\chi,\quad\textrm{for all convex }f:\R\mapsto\R.
\end{equation}
Since we can apply \eqref{eq:cx} to all affine functions, including $f(x)=\pm 1$ and $f(x)=\pm x$, we obtain that
if $\eta \leq_{cx} \chi$ then $\eta$ and $\chi$ have the same total mass ($\eta(\R)= \chi(\R)$) and the same first moment ($\overline{\eta}= \overline{\chi}$).
Moreover, necessarily we must have $\ell_\chi \leq \ell_\eta \leq r_\eta \leq r_\chi$. From simple approximation arguments (see Hirsch et al. \cite{peacock}) we also have that if $\eta$ and $\chi$ have the same total mass and the same barycentre, then $\eta \leq_{cx} \chi$ if and only if $P_{\eta}(k) \leq P_{\chi}(k)$, $k\in\R$.

For our purposes in the sequel we need a generalisation of the convex order of two measures. We follow Beiglb\"{o}ck and Juillet \cite{BeiglbockJuillet:16} and say $\eta,\chi\in\sM$ are in an \textit{extended} convex order, and write $\eta\leq_{E}\chi$, if
\begin{equation*}
\int fd\eta\leq\int fd\chi,\quad\textrm{for all non-negative, convex }f:\R\mapsto\R_+.
\end{equation*}
If $\eta\leq_{cx}\chi$ then also $\eta\leq_E\chi$ (since non-negative convex functions are convex), while if $\eta\leq\chi$, we also have that $\eta\leq_E\chi$ (since non-negative convex functions are non-negative). Note that, if $\eta\leq_E\chi$, then $\eta(\R)\leq\chi(\R)$ (apply the non-negative convex function $\phi(x)=1$ in the definition of $\leq_E$). It is also easy to prove that, if $\eta(\R)=\chi(\R)$, then $\eta\leq_{E}\chi$ is equivalent to $\eta\leq_{cx}\chi$.

For $\eta,\chi\in\sP$, let ${\Pi}(\eta,\chi)$ be the set of probability measures on $\R^2$ with the first marginal $\eta$ and second marginal $\chi$. Let ${\Pi}_M(\eta,\chi)$ be the set of martingale couplings of $\eta$ and $\chi$.
Then
\begin{equation*}
{\Pi}_M(\eta,\chi) = \big\{ \pi \in {\Pi}(\eta,\chi) : \mbox{\eqref{eq:martingalepi} holds} \big\},
\end{equation*}
where \eqref{eq:martingalepi} is the martingale condition
\begin{equation}
\int_{x \in B} \int_{y \in \R} y \pi(dx,dy) = \int_{x \in B} \int_{y \in \R} x \pi(dx,dy) = \int_B x \eta(dx), \quad
\mbox{$\forall$ Borel $B \subseteq \R$}.
\label{eq:martingalepi}
\end{equation}
Equivalently, $\Pi_M(\eta,\chi)$ consists of all transport plans $\pi$ (i.e., elements of ${\Pi}(\eta,\chi)$) such that the disintegration in probability measures $(\pi_x)_{x\in\R}$ with respect to $\eta$ satisfies $\int_\R y\pi_x(dy)=x$ for $\eta$-almost every $x$. 

If we ignore the martingale requirement \eqref{eq:martingalepi}, it is easy to see that the set of probability measures with given marginals is non-empty, i.e., ${\Pi}(\eta,\chi)\neq\emptyset$ (consider the product measure $\eta\otimes\chi$). However, the fundamental question whether, for given $\eta$ and $\chi$, the set of martingale couplings $\Pi_M(\eta,\chi)$ is non-empty, is more delicate. For any $\pi\in\Pi_M(\eta,\chi)$ and convex $f:\R\mapsto\R$, by (conditional) Jensen's inequality we have that
\begin{equation*}
\int_\R f(x)\eta(dx)\leq \int_\R\int_\R f(y)\pi_x(dy)\eta(dx)=\int_\R f(y)\pi(\R,dy)=\int_\R f(y)\chi(dy),
\end{equation*}
so that $\eta\leq_{cx}\chi$. On the other hand, Strassen \cite{Strassen:65} showed that a converse is also true (i.e., $\eta\leq_{cx}\chi$ implies that ${\Pi}_M(\eta,\chi)\neq\emptyset$), so that ${\Pi}_M(\eta,\chi)$ is non-empty if and only if $\eta\leq_{cx}\chi$.

\subsection{Convex hull}\label{prelims:hull}

Our key results will be expressed in terms of the convex hull. For $f : \R \mapsto (-\infty,\infty)$ let $f^c$ be the largest convex function which lies below $f$. In our typical application $f$ will be non-negative and this property will be inherited by $f^c$. However, in general we may have $f^c$ equal to $-\infty$ on $\R$, and the results of this section are stated in a way which includes this case.  Note that if a function $g$ is equal to $-\infty$ (or $\infty$) everywhere, then we deem it to be both linear and convex, and set $g^c$ equal to $g$.

Fix $x,z\in\R$ with $x \leq z$. For $x<z$, define $L^f_{x,z}:[x,z]\mapsto\R$ by
\begin{equation}\label{eq:L1}
L^f_{x,z}(y)=f(x)\frac{z-y}{z-x}+f(z)\frac{y-x}{z-x},\quad y\in[x,z]
\end{equation}
and for $x=z$, define $L^f_{x,x}:\{x\}\mapsto\R$ by
$L^f_{x,x}(x)=f(x)$.
Then, see Rockafellar \cite[Corollary 17.1.5]{Rockafellar:72},
\begin{equation}\label{eq:chull}
f^c(y)=\inf_{x\leq y\leq z}L^f_{x,z}(y),\quad y\in\R.
\end{equation}

Moreover, it is not hard to see (at least pictorially, by drawing the graphs of $f$ and $f^c$) that $f^c$ replaces the non-convex segments of $f$ by straight lines. (Proofs of lemmas in this section are given in \cref{sec:proofs}.)

\begin{lem}
	\label{lem:linear}
	Let $f:\R\mapsto\R$ be a lower semi-continuous function. Suppose $f>f^c$ on $(a,b)\subseteq\R$. Then $f^c$ is linear on $(a,b)$.
\end{lem}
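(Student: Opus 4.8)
The plan is to localise. Since $f^c$ is convex, it is affine on $(a,b)$ as soon as it is affine on a neighbourhood of each point of $(a,b)$: its right derivative is nondecreasing, and a nondecreasing function that is locally constant on the connected set $(a,b)$ is constant. So I fix $y_0\in(a,b)$ and aim to show $f^c$ is affine near $y_0$. I first discard the case $f^c\equiv-\infty$, which is linear by the convention adopted above; otherwise $f^c$ is proper convex, hence never $-\infty$, and being $\le f<\infty$ on $(a,b)$ it is finite (thus continuous) there, so it admits a supporting line $\ell$ at $y_0$. Subtracting $\ell$ replaces $f,f^c$ by $g:=f-\ell$ and $g^c=f^c-\ell$ (the convex hull commutes with subtracting an affine function, straight from \eqref{eq:chull}) and normalises the situation to $g\ge 0$, $g^c\ge 0$, $g^c(y_0)=0$ and $g(y_0)=f(y_0)-f^c(y_0)>0$; now affineness of $f^c$ near $y_0$ is equivalent to $g^c\equiv 0$ near $y_0$. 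This is the only place lower semicontinuity enters: as $g$ is l.s.c. with $g(y_0)>0$, the superlevel set $\{g>c_0\}$ is open for any $c_0\in(0,g(y_0))$, so $g>c_0$ on some $N:=(y_0-\delta,y_0+\delta)$.

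Next I would analyse the infimum in \eqref{eq:chull}. Writing $g^c(y_0)=\inf_{x\le y_0\le z}L^g_{x,z}(y_0)=0$, I choose a minimising sequence $x_n\le y_0\le z_n$ (necessarily $x_n<z_n$) and set $t_n=(y_0-x_n)/(z_n-x_n)$, so that $L^g_{x_n,z_n}(y_0)=(1-t_n)g(x_n)+t_n g(z_n)\to 0$; since $g\ge 0$, both summands vanish in the limit. The crucial claim is that the endpoints eventually leave $N$, that is $x_n\le y_0-\delta$ and $z_n\ge y_0+\delta$. Indeed, if $x_n\in(y_0-\delta,y_0)$ along a subsequence then $g(x_n)>c_0$ forces $1-t_n\to 0$, which forces $z_n\to y_0^+$, so eventually $z_n\in N$ and $g(z_n)>c_0$, whence $t_n g(z_n)\to c_0>0$, contradicting $L^g_{x_n,z_n}(y_0)\to 0$; the argument for $z_n$ is symmetric.

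The main obstacle is precisely that this infimum need not be attained at finite points: a minimising chord may run off to infinity (exactly what happens for an $f$ that violates lower semicontinuity), so one cannot simply read off a finite affine segment of $f^c$. I would sidestep attainment and instead estimate the slope. With $x_n\le y_0-\delta$ and $z_n\ge y_0+\delta$, the relations $t_n g(z_n)\to 0$ and $(1-t_n)g(x_n)\to 0$ give, after cancelling the weights and using $y_0-x_n\ge\delta$ and $z_n-y_0\ge\delta$, that $g(z_n)/(z_n-x_n)\to 0$ and $g(x_n)/(z_n-x_n)\to 0$; hence the chord slope $s_n=(g(z_n)-g(x_n))/(z_n-x_n)\to 0$. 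Since $L^g_{x_n,z_n}(y_0\pm\delta/2)=L^g_{x_n,z_n}(y_0)\pm(\delta/2)s_n\to 0$ while $g^c\le L^g_{x_n,z_n}$ on $[x_n,z_n]\ni y_0\pm\delta/2$ by \eqref{eq:chull}, I obtain $0\le g^c(y_0\pm\delta/2)\le 0$. Finally, a non-negative convex function that vanishes at $y_0-\delta/2$ and at $y_0+\delta/2$ vanishes on the segment between them, so $g^c\equiv 0$ near $y_0$; this is local affineness of $f^c$ at $y_0$, and the localisation step of the first paragraph upgrades it to affineness on all of $(a,b)$.
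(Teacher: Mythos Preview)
Your proof is correct, but takes a genuinely different route from the paper's. The paper argues globally: assuming $f^c$ is not affine on $(a,b)$, it considers the chord $L^{f^c}_{a,b}$ (so that $f^c<L^{f^c}_{a,b}$ strictly on $(a,b)$), sets $\eta=\inf_{u\in(a,b)}\{f(u)-L^{f^c}_{a,b}(u)\}$, and in either case ($\eta\ge0$ or $\eta<0$) exhibits a convex minorant of $f$ that strictly dominates $f^c$ somewhere, namely $f^c\vee L^{f^c}_{a,b}$ or $f^c\vee(L^{f^c}_{a,b}+\eta)$. Lower semicontinuity is used only to guarantee that the infimum is attained on the compact set $[a,b]$, so that the point where the new minorant beats $f^c$ can be located.

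Your approach localises, normalises by subtracting a supporting line, and then analyses a minimising sequence of chords from the representation \eqref{eq:chull}. You use lower semicontinuity differently, to produce a neighbourhood $N$ on which $g>c_0$, and then force the chord endpoints out of $N$ and the slopes to zero. This buys you something the paper's argument does not: it never uses the endpoints $a,b$, so it goes through verbatim when $(a,b)$ is a half-line or all of $\R$, whereas the paper's proof tacitly needs $a,b$ finite to form $L^{f^c}_{a,b}$ and to invoke compactness. The price is a longer argument with a sequence extraction. One minor wording point: where you write ``whence $t_n g(z_n)\to c_0>0$'', you really mean $\liminf t_n g(z_n)\ge c_0>0$; the contradiction with $t_n g(z_n)\to 0$ is of course unaffected.
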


The following lemmas are the main ingredients in the proofs of \cref{thm:shadow_potential} and \cref{thm:shadow_assoc}.
\begin{lem}\label{lem:diff}
	Let $f,g:\R\mapsto\R$ be two convex functions. Define $\psi:\R\mapsto(-\infty,\infty]$ by
	$
	\psi =g-(g-f)^c
	$.
	Then $\psi$ is convex.
\end{lem}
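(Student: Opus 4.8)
The plan is to prove convexity of $\psi$ via the supporting-line criterion: I will show that at every point $y_0\in\R$ there is an affine function lying below $\psi$ and touching it at $y_0$, so that $\psi$ is the supremum of affine functions and hence convex. First some reductions. Since $f$ and $g$ are finite convex functions they are continuous, so $h:=g-f$ is continuous, in particular lower semi-continuous, and $(g-f)^c$ is its largest convex minorant with $(g-f)^c\le h$. As $h<\infty$ everywhere, the convex function $(g-f)^c$ is $<\infty$ everywhere; if it took the value $-\infty$ at a single interior point it would be identically $-\infty$ on $\R$, in which case $\psi\equiv+\infty$ is convex by the stated convention. So I may assume $h^c:=(g-f)^c$ is real-valued, hence continuous.

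Next I record two facts. Write $F:=\{h^c=h\}$, a closed set, and decompose its complement into the open intervals $(a_i,b_i)$ on each of which, by \cref{lem:linear}, $h^c$ coincides with an affine function $\ell_i$ of slope $m_i$. On $F$ one has $\psi=g-h=f$, while from $h^c\le g-f$ one gets the global bound $\psi=g-h^c\ge f$. Thus $\psi\ge f$ everywhere with equality on $F$, and $\psi=G_i:=g-\ell_i$ on $[a_i,b_i]$, where each $G_i$ is convex.

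With these in hand I construct the supporting line at a point $y_0$. If $y_0\in F$, any supporting line $A$ of $f$ at $y_0$ satisfies $A\le f\le\psi$ with $A(y_0)=f(y_0)=\psi(y_0)$, so it supports $\psi$. If $y_0$ lies in a gap $(a_i,b_i)$, I take a supporting line $A$ of the convex function $G_i$ at $y_0$ (so $A\le G_i$ globally and $A(y_0)=G_i(y_0)=\psi(y_0)$). On $[a_i,b_i]$ we have $\psi=G_i\ge A$. For $y$ outside $[a_i,b_i]$ I argue $A\le f\le\psi$ by comparing one-sided derivatives at the contact endpoints: because $h\ge\ell_i$ on the gap with equality at $b_i$, one obtains $h'_-(b_i)\le m_i$, i.e. $g'_-(b_i)-f'_-(b_i)\le m_i$; since the slope $s$ of $A$ satisfies $s\le(G_i)'_-(b_i)=g'_-(b_i)-m_i$, this yields $s\le f'_-(b_i)\le f'_+(b_i)$, while $A(b_i)\le G_i(b_i)=f(b_i)$. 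Hence for $y\ge b_i$ one gets $A(y)\le f(b_i)+f'_+(b_i)(y-b_i)\le f(y)\le\psi(y)$; the symmetric computation at $a_i$ handles $y\le a_i$, and unbounded gaps (those missing one endpoint, or $F=\emptyset$) are only easier. Thus $A$ supports $\psi$ at $y_0$.

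Since $\psi$ admits a supporting affine minorant at every point, it equals the supremum of these affine functions and is therefore convex. The crux of the argument --- and the only place requiring real care --- is the gap case: verifying that a supporting line of the \emph{local} convex piece $G_i$ does not rise above $\psi$ away from the gap. This is precisely where the convexity of $f$ enters, through the one-sided-derivative inequalities at the contact points $a_i,b_i$ that are made available by \cref{lem:linear}.
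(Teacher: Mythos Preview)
Your proof is correct and rests on the same structural ingredients as the paper's: the decomposition of $\R$ into the contact set $F=\{h=h^c\}$ and its complementary gaps, the use of \cref{lem:linear} to obtain affinity of $h^c$ on each gap, and the global bound $\psi\ge f$ with equality on $F$. The difference is purely in how convexity is certified: the paper verifies the chord inequality $\psi(y)\le L^{\psi}_{x,z}(y)$ by a case split on the position of $x,y,z$ relative to the gap endpoints $X(y),Z(y)$, whereas you construct an affine minorant through each point (a supporting line of $f$ on $F$, a supporting line of $G_i=g-\ell_i$ on a gap) and use one-sided-derivative comparisons at the contact endpoints to propagate it below $f$ outside the gap. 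The two arguments are dual and of comparable length; your version has the mild advantage of making explicit where differentiability of $f$ and $g$ (through one-sided derivatives) is used, while the paper's chord argument avoids derivatives altogether.
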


\begin{lem}\label{lem:equal_hulls}
	Let $f:\R\mapsto\R$ be any measurable function and let $g:\R\mapsto\R$ be a convex function. Then
	$$
	(f-g)^c=(f^c-g)^c.
	$$
\end{lem}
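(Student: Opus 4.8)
The plan is to prove the two inequalities $(f^c-g)^c \le (f-g)^c$ and $(f-g)^c \le (f^c-g)^c$ separately, relying on only two elementary facts: that the convex-hull operation is monotone (if $h_1 \le h_2$ then $h_1^c \le h_2^c$, which is immediate from the characterisation of $h^c$ as the largest convex function lying below $h$), and that the sum of two convex functions is convex.

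For the first inequality, recall that $f^c \le f$ by definition. Since $g$ is real-valued, adding $-g$ preserves the pointwise order, so $f^c - g \le f - g$ on all of $\R$. Applying the monotonicity of the convex hull yields $(f^c-g)^c \le (f-g)^c$.

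For the reverse inequality, I would set $\phi := (f-g)^c$. By definition $\phi$ is convex and satisfies $\phi \le f-g$, hence $\phi + g \le f$. Because $\phi$ and $g$ are both convex, their sum $\phi+g$ is a convex function lying below $f$; since $f^c$ is the largest such function, $\phi+g \le f^c$, i.e. $\phi \le f^c - g$. As $\phi$ is itself convex and lies below $f^c - g$, it is dominated by the largest convex function below $f^c-g$, namely $(f^c-g)^c$. Thus $(f-g)^c = \phi \le (f^c-g)^c$, and combining the two directions gives the claimed identity.

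The only point requiring care is the bookkeeping around the conventions for functions identically equal to $\pm\infty$: if $f^c \equiv -\infty$, then $f^c - g \equiv -\infty$ (as $g$ is finite) and both sides equal $-\infty$, and one checks that the argument for the reverse inequality forces $\phi \equiv -\infty$ in this case as well, so the identity persists. The genuinely substantive step—and the only place where the convexity of $g$ enters—is the observation that $\phi+g$ is convex and therefore bounded above by $f^c$; every other step is pure monotonicity of $\cdot^c$.
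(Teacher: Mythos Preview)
Your proof is correct and follows essentially the same approach as the paper's: both directions are argued identically, using monotonicity of the convex hull for $(f^c-g)^c \le (f-g)^c$, and for the reverse inequality using that $(f-g)^c + g$ is a convex minorant of $f$ and hence of $f^c$. Your additional remark on the $\pm\infty$ conventions is a small bonus not spelled out in the paper's proof.
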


\begin{lem}\label{CSpotential} Assume that
	$f\in \sD(\alpha, \beta)$ and $g\in \sD(a,b)$ for some $\alpha, a \geq 0$, $\beta, b \in \R$. Let $h:\R\mapsto\R$ be defined by $h(k):=(a-\alpha)k-(b-\beta)$.
	
	Suppose that $g\geq f$. Then $\alpha \leq a$. If $\alpha = a$ then $\beta \geq b$.
	
	
	Suppose that $g \geq f$ and $g-f\geq h$. Then $ (g-f)^c\in \sD(a-\alpha, {b-\beta}).$
\end{lem}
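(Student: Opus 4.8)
The plan is to read every hypothesis through the asymptotic behaviour of functions in $\sD$: if $f\in\sD(\alpha,\beta)$ then $f(z)\to 0$ as $z\downarrow-\infty$ and $f(z)-(\alpha z-\beta)\to 0$ as $z\uparrow\infty$, and similarly for $g\in\sD(a,b)$. Consequently the difference $\phi:=g-f$ is a finite-valued (but not necessarily convex) function satisfying $\phi(z)\to 0$ as $z\downarrow-\infty$ and $\phi(z)-h(z)\to 0$ as $z\uparrow\infty$, where $h(z)=(a-\alpha)z-(b-\beta)$. This single observation drives both parts. For the first part I would argue directly from $z\uparrow\infty$: since $g\geq f$ we have $\phi\geq 0$ everywhere while $\phi(z)-h(z)\to 0$, so $h(z)$ cannot tend to $-\infty$, which forces the leading coefficient $a-\alpha\geq 0$, i.e.\ $\alpha\leq a$. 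If $\alpha=a$ then $h\equiv\beta-b$ is constant and $\phi(z)\to\beta-b$, so nonnegativity of $\phi$ gives $\beta-b\geq 0$, i.e.\ $\beta\geq b$.

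For the second part the key structural remark is that both $0$ and $h$ are convex minorants of $\phi$: the zero function is convex and lies below $\phi$ because $g\geq f$, while $h$ is affine (hence convex) and lies below $\phi$ by the hypothesis $g-f\geq h$. Since $(g-f)^c=\phi^c$ is by definition the largest convex function below $\phi$, this yields the two-sided sandwich $\max(h,0)\leq\phi^c\leq\phi$ pointwise, and in particular $\phi^c$ is a genuine finite-valued convex function rather than the $-\infty$ function.

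I would then verify membership in $\sD(a-\alpha,b-\beta)$ property by property. Convexity is automatic. Finiteness and nonnegativity follow from $0\leq\max(h,0)\leq\phi^c\leq\phi<\infty$. The limit $\phi^c(z)\to 0$ as $z\downarrow-\infty$ follows by squeezing $0\leq\phi^c\leq\phi$ on the region where $h\leq 0$ and using $\phi\to 0$. The asymptotic slope at $+\infty$ follows by squeezing $h\leq\phi^c\leq\phi$ (valid once $h\geq 0$) and subtracting $h$, since $0\leq\phi^c-h\leq\phi-h\to 0$ gives $\phi^c(z)-((a-\alpha)z-(b-\beta))\to 0$. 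Finally, for monotonicity I would use that a convex function with a finite limit at $-\infty$ is necessarily non-decreasing (otherwise a strictly negative slope to the left of some point would drive the function to $+\infty$ as $z\downarrow-\infty$), and $\phi^c$ has limit $0$ there; hence $\phi^c$ is increasing.

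I expect the only genuine subtlety, rather than the main sandwich argument, to be the degenerate case $\alpha=a$, where $\sD(a-\alpha,b-\beta)=\sD(0,b-\beta)$ is empty unless $b-\beta=0$. Here I would first note that the hypotheses themselves force $b=\beta$: with $\alpha=a$ the function $h$ is the constant $\beta-b$, so $g-f\geq h$ together with $\phi(z)\to 0$ as $z\downarrow-\infty$ gives $\beta-b\leq 0$, which combined with $\beta\geq b$ from the first part yields $\beta=b$ and $h\equiv 0$. Then $\phi\geq 0$ with $\phi\to 0$ at both $\pm\infty$, so letting $x\downarrow-\infty$ and $z\uparrow\infty$ in the defining infimum of chords $L^\phi_{x,z}(y)$ forces $\phi^c\leq 0$; combined with $\phi^c\geq 0$ this gives $\phi^c\equiv 0\in\sD(0,0)$, as required.
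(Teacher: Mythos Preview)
Your proof is correct and follows essentially the same route as the paper's: both arguments rest on the sandwich $h^+ = \max(h,0) \leq (g-f)^c \leq g-f$ together with the asymptotics $(g-f)(z)\to 0$ at $-\infty$ and $(g-f)(z)-h(z)\to 0$ at $+\infty$. The paper compresses the second part into a single line by noting $\lim_{|k|\to\infty}\{(g-f)(k)-h^+(k)\}=0$, whereas you verify the defining properties of $\sD(a-\alpha,b-\beta)$ one at a time; in particular you make the monotonicity check explicit (via convexity plus a finite limit at $-\infty$) and you fold the degenerate case $\alpha=a$ into the proof itself, while the paper relegates that case to a remark after the lemma statement.
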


Note that in the above lemma if $\alpha=a$ and $\beta>b$ then there are no pairs of functions $(f,g)$ with $f\in \sD(a, \beta)$, $g\in \sD(a,b)$  and $g(z)-f(z) \geq h(z) = \beta - b$. We cannot have both $\lim_{z \downarrow -\infty} (g(z)-f(z))=0$ and $g(z)-f(z) \geq h(z) = \beta - b>0$. In this case the final statement of the lemma is vacuous.

One main use of Lemma~\ref{CSpotential} is via the following corollary which follows immediately when Lemma~\ref{CSpotential} is combined with Corollary~\ref{corE}:
\begin{cor} \label{cor:CSpotential}
	Suppose $\mu \leq_E \nu$. Then $(P_{\nu} - P_{\mu})^c \in \sD ( \nu(\R)-\mu(\R), \overline{\nu}- \overline{\mu}) $.
\end{cor}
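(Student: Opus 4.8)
The plan is to read off \cref{cor:CSpotential} as a direct specialisation of the second part of \cref{CSpotential}, taking $f = P_\mu$ and $g = P_\nu$. Since the preliminaries record that $P_\mu \in \sD(\mu(\R), \overline{\mu})$ and $P_\nu \in \sD(\nu(\R), \overline{\nu})$, the parameters in \cref{CSpotential} become $\alpha = \mu(\R)$, $\beta = \overline{\mu}$, $a = \nu(\R)$, $b = \overline{\nu}$, so that the auxiliary affine function is
\[
h(k) = (a-\alpha)k - (b-\beta) = (\nu(\R)-\mu(\R))k - (\overline{\nu}-\overline{\mu}).
\]
With these identifications the conclusion of \cref{CSpotential}, namely $(g-f)^c \in \sD(a-\alpha, b-\beta)$, is exactly the assertion $(P_\nu - P_\mu)^c \in \sD(\nu(\R)-\mu(\R), \overline{\nu}-\overline{\mu})$ that we want. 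Everything therefore reduces to checking the two hypotheses of the lemma: $g \geq f$ and $g - f \geq h$.

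Both inequalities I would extract from the extended convex order via \cref{corE}. For each fixed $k$ the maps $x \mapsto (k-x)^+$ and $x \mapsto (x-k)^+$ are non-negative and convex, so \cref{corE} converts $\mu \leq_E \nu$ into the two pointwise bounds $P_\mu \leq P_\nu$ and $C_\mu \leq C_\nu$. The first is precisely $g \geq f$. To obtain the second hypothesis I would feed $C_\mu \leq C_\nu$ through the put--call identity $C_\eta - P_\eta = \overline{\eta} - \eta(\R)k$ recorded in \cref{sec:prelims}: subtracting the instances at $\mu$ and $\nu$ gives
\[
C_\nu - C_\mu = (P_\nu - P_\mu) + (\overline{\nu}-\overline{\mu}) - (\nu(\R)-\mu(\R))k,
\]
and non-negativity of the left-hand side rearranges to $P_\nu - P_\mu \geq (\nu(\R)-\mu(\R))k - (\overline{\nu}-\overline{\mu}) = h(k)$, i.e.\ $g - f \geq h$. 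With both hypotheses in hand, \cref{CSpotential} delivers the claim.

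I do not expect a serious obstacle: the corollary is genuinely immediate once this dictionary is fixed, and the only point requiring any care is recognising that the affine lower bound $h$ in \cref{CSpotential} is nothing other than the \emph{call} ordering $C_\mu \leq C_\nu$ rewritten through the put--call identity, with the \emph{put} ordering $P_\mu \leq P_\nu$ supplying the complementary hypothesis. One should also observe that the degenerate possibility flagged after \cref{CSpotential} (the case $\alpha = a$ with $\beta > b$, in which the conclusion is vacuous) never occurs here: when $\mu(\R) = \nu(\R)$ the relation $\mu \leq_E \nu$ is equivalent to $\mu \leq_{cx} \nu$, which forces $\overline{\mu} = \overline{\nu}$ and hence $\beta = b$, so the hypotheses are satisfied nonvacuously.
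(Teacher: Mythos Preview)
Your argument is correct and matches the paper's approach exactly: apply \cref{CSpotential} with $f=P_\mu$, $g=P_\nu$, and verify its two hypotheses. One small point: the argument you actually give for $P_\mu\le P_\nu$ and $P_\nu-P_\mu\ge h$ does not use \cref{corE} at all---you are simply applying the \emph{definition} of $\leq_E$ to the non-negative convex test functions $(k-\cdot)^+$ and $(\cdot-k)^+$ and then invoking put--call parity; this is in fact more elementary than the paper's route (which invokes \cref{corE}, i.e.\ the bound $P_\nu-P_\mu\ge P_\beta=h^+$), so just correct the citation.
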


Note that if $\mu \leq_E \nu$ and $\mu(\R) = \nu(\R)$ then $\mu \leq_{cx} \nu$ and $\overline{\mu} = \overline{\nu}$ so that $(P_{\nu} - P_{\mu})^c$ is the zero function which is the unique element in $\sD(0,0)$.

\section{The maximal element}\label{sec:max}
Let $\mu,\nu\in\sM$ with $\mu\leq_E\nu$ and define $\sM^\nu_\mu=\{\eta\in\sM:\mu\leq_{cx}\eta\leq\nu\}$. Then $\sM^\nu_\mu$ is a set of terminal laws of a martingale that embeds $\mu$ into $\nu$. Note that $\eta\in\sM^\nu_\mu$ if and only if $P_\eta\in\sC(\mu,\nu)$.

In this section we show that $\sM^\nu_\mu\neq\emptyset$. In fact we find the largest measure (w.r.t. convex order) in $\sM^\nu_\mu$ (see \cref{thm:maximal}).
\begin{defn}[Counter-shadow measure]\label{defn:maximal}
	Let $\mu,\nu \in\sM$ and assume $\mu \leq_E \nu$. The counter-shadow of $\mu$ in $\nu$, denoted by $T^{\nu}(\mu)$, has the following properties:
	\begin{enumerate}
		\item\label{item1} $\mu\leq_{cx}T^\nu(\mu)$,
		\item\label{item2} $T^\nu(\mu)\leq\nu$,
		\item\label{item3} If $\eta$ is another measure satisfying $\mu \leq_{cx} \eta \leq \nu$ then $\eta\leq_{cx}T^\nu(\mu)$.
	\end{enumerate}
\end{defn}

\begin{rem}\label{rem:Ecx0}
	If $\mu \leq_{cx} \nu$ then necessarily $T^\nu(\mu)=\nu$, since $\{ \eta: \mu \leq_{cx} \eta \leq \nu \}$ is the singleton $\{\nu\}$.
\end{rem}

\begin{lem}[Beiglb\"{o}ck and Juillet \cite{BeiglbockJuillet:16}, Lemma 4.5]\label{lem:max}
	For $\mu, \nu \in \sM$ with $\mu \leq_E \nu$, $T^\nu(\mu)$ exists and is unique.
\end{lem}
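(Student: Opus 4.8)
The plan is to pass to modified potentials, where maximisation over $\sM^\nu_\mu$ becomes maximisation of a convex function, and then to write $T^\nu(\mu)$ down explicitly. Recall that $\eta\in\sM^\nu_\mu$ iff $P_\eta\in\sC(\mu,\nu)$, and that every such $\eta$ carries the mass $\mu(\R)$ and first moment $\overline{\mu}$ of $\mu$, so that $\eta\leq_{cx}\eta'$ is equivalent to $P_\eta\leq P_{\eta'}$ pointwise. Hence the largest element of $\sM^\nu_\mu$ (if it exists) is the one with the pointwise-largest potential. If $\mu(\R)=\nu(\R)$ then $\mu\leq_{cx}\nu$ and $T^\nu(\mu)=\nu$ by \cref{rem:Ecx0}, so assume $m:=\nu(\R)-\mu(\R)>0$ and set $b:=(\overline{\nu}-\overline{\mu})/m$. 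My candidate is the measure $\theta$ determined by
\[
P_\theta=(P_\nu-P_{m\delta_b})^c,
\]
a single convex hull; I would record this formula as the explicit construction promised in \cref{thm:maximal}.

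The key auxiliary inequality is $P_{m\delta_b}+P_\mu\leq P_\nu$. To see it, note that by \cref{cor:CSpotential} the hull $(P_\nu-P_\mu)^c$ lies in $\sD(m,\overline{\nu}-\overline{\mu})$, so it is the potential of a measure of mass $m$ and barycentre $b$; since $m\delta_b$ is the convex-order least measure of that mass and barycentre, $P_{m\delta_b}\leq(P_\nu-P_\mu)^c\leq P_\nu-P_\mu$, which rearranges to the claim. With this in hand I would first check that $P_\theta$ is a bona fide potential of the right class: taking $f=P_{m\delta_b}\in\sD(m,\overline{\nu}-\overline{\mu})$ and $g=P_\nu\in\sD(\nu(\R),\overline{\nu})$, the inequality gives $g\geq f$ as well as $g-f\geq P_\mu\geq\mu(\R)k-\overline{\mu}$, so \cref{CSpotential} applies and yields $P_\theta=(g-f)^c\in\sD(\mu(\R),\overline{\mu})$. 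Thus $\theta\in\sM$ has mass $\mu(\R)$ and first moment $\overline{\mu}$. Property~\ref{item1} holds because $P_\mu$ is convex and $P_\mu\leq P_\nu-P_{m\delta_b}$, so $P_\mu\leq(P_\nu-P_{m\delta_b})^c=P_\theta$; property~\ref{item2} holds because $P_\nu-P_\theta=P_\nu-(P_\nu-P_{m\delta_b})^c$ is convex by \cref{lem:diff} applied to the convex functions $P_{m\delta_b}$ and $P_\nu$.

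The decisive step is the maximality property~\ref{item3}, and the idea I would use is to pass to the complementary measure. Let $\eta$ be any competitor with $\mu\leq_{cx}\eta\leq\nu$ and set $\rho:=\nu-\eta\geq0$. Then $\rho$ has mass $m$ and barycentre $b$, so $m\delta_b\leq_{cx}\rho$ and therefore $P_{m\delta_b}\leq P_\rho=P_\nu-P_\eta$, i.e. $P_\eta\leq P_\nu-P_{m\delta_b}$. Since $P_\eta$ is convex this forces $P_\eta\leq(P_\nu-P_{m\delta_b})^c=P_\theta$, and as $\eta$ and $\theta$ both carry the mass and first moment of $\mu$ this is exactly $\eta\leq_{cx}\theta$. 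Hence $\theta$ meets all three requirements and $T^\nu(\mu)=\theta$ exists.

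Uniqueness is then soft: if $\theta_1$ and $\theta_2$ both satisfy the definition, applying property~\ref{item3} for $\theta_1$ to the admissible competitor $\theta_2$ gives $\theta_2\leq_{cx}\theta_1$, and symmetrically $\theta_1\leq_{cx}\theta_2$; antisymmetry of the convex order (i.e. $P_{\theta_1}=P_{\theta_2}$) gives $\theta_1=\theta_2$. I expect the main obstacle to be recognising the correct single-hull candidate and, relatedly, the verification of maximality: the step that makes everything work is the observation that the complement $\nu-\eta$ of any competitor has fixed mass $m$ and fixed barycentre $b$, so it dominates $m\delta_b$ in convex order, which is precisely what couples every competitor to the one hull $(P_\nu-P_{m\delta_b})^c$. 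Checking the two hypotheses of \cref{CSpotential} is then routine once the auxiliary inequality $P_{m\delta_b}+P_\mu\leq P_\nu$ is established.
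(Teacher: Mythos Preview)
Your candidate $(P_\nu-P_{m\delta_b})^c$ is exactly the paper's $(\tilde P^\nu_\mu)^c$: in the proof of \cref{thm:maximal} it is observed that $\tilde P^\nu_\mu(k)=P_\nu(k)-((\nu(\R)-\mu(\R))k-(\overline\nu-\overline\mu))^+=P_\nu(k)-P_{m\delta_b}(k)$. So your construction coincides with the paper's alternative construction, and the verification of properties~\ref{item1} and~\ref{item2} via \cref{lem:diff} and \cref{CSpotential} is the same. Where you genuinely differ is in the logical order. The paper quotes Beiglb\"ock--Juillet's quantile-based construction (choose $\zeta_*$ so that the tail-measure $\theta^{\zeta_*}$ has the right mean) for existence, and only afterwards, in \cref{thm:maximal}, identifies the potential formula \emph{assuming} $T^\nu(\mu)$ already exists. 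You instead prove existence directly from the potential side: your maximality step --- pass to the complement $\rho=\nu-\eta$, note it has fixed mass $m$ and barycentre $b$, hence $P_{m\delta_b}\le P_\rho$, hence $P_\eta\le P_\nu-P_{m\delta_b}$ and so $P_\eta\le(P_\nu-P_{m\delta_b})^c$ --- makes the argument self-contained and avoids the quantile construction altogether.

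There is one genuine logical gap in your write-up: you invoke \cref{cor:CSpotential} to obtain the auxiliary inequality $P_{m\delta_b}\le P_\nu-P_\mu$. In the paper's dependency structure, \cref{cor:CSpotential} rests on \cref{corE}, whose proof uses \cref{lem:order}, whose proof of (i)$\Rightarrow$(ii) uses \cref{thm:maximal}, whose proof in turn presupposes \cref{lem:max}. So as written your argument is circular. The repair is immediate and does not require the detour through $(P_\nu-P_\mu)^c$: since $x\mapsto(k-x)^+$ and $x\mapsto(x-k)^+$ are non-negative and convex, $\mu\leq_E\nu$ gives $P_\mu\le P_\nu$ and $C_\mu\le C_\nu$ directly, and the latter rearranges to $P_\nu(k)-P_\mu(k)\ge(\nu(\R)-\mu(\R))k-(\overline\nu-\overline\mu)$; together these yield $P_\nu-P_\mu\ge P_{m\delta_b}$. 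With this elementary replacement your proof is complete and in fact slightly stronger than the paper's treatment, since it delivers existence, uniqueness and the explicit formula in one pass.
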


Beiglb\"{o}ck and Juillet \cite{BeiglbockJuillet:16} not only prove the existence and uniqueness of $T^{\nu}(\mu)$, but also show how to construct $T^{\nu}(\mu)$. Let $G_\nu:[0,\nu(\R)]\mapsto\R$ be a quantile function of $\nu$. Then each $\zeta\in[0,\mu(\R)]$ defines a measure
$$
\theta^\zeta=\nu\lvert_{(-\infty,G(\zeta))}+\nu\lvert_{(G(\zeta+\nu(\R)-\mu(\R)),\infty)}+\alpha^\zeta\delta_{G(\zeta)}
+\beta^\zeta\delta_{G(\zeta+\nu(\R)-\mu(\R))},
$$
where $0\leq\alpha^\zeta = \zeta - \nu((-\infty,G(\zeta))) \leq\nu(\{G(\zeta)\})$ and $0\leq\beta^\zeta = \mu(\R) - \zeta  - \nu((G(\zeta+\nu(\R)-\mu(\R)),\infty)) \leq\nu(\{G(\zeta+\nu(\R)-\mu(\R))\})$. By construction, $\theta^\zeta(\R)=\mu(\R)$.  Furthermore, $\overline{\theta^{0}}\geq\overline\mu\geq \overline{\theta^{\mu(\R)}}$ and $\overline{\theta^{\zeta}}$ is continuous and decreasing in $\zeta$, and therefore there exists $\zeta_*$ such that $\overline{\theta^{\zeta_*}}=\overline{\mu}$. Then, since $\theta^{\zeta_*}$ is concentrated in the tails of $\nu$, it can be shown that $\theta^{\zeta_*}$ satisfies all three properties in \cref{defn:maximal}.

Our first result provides an alternative explicit construction of $T^\nu(\mu)$ via potential functions. Let $\tilde{P}^{\nu}_\mu:\R\mapsto\R$ be given by
$$
\tilde{P}^{\nu}_\mu(k)=\min\{P_\nu(k),C_\nu(k) +  (\mu(\R)k-\overline{\mu})\},\quad k\in\R.
$$

\begin{thm}\label{thm:maximal} Suppose $\mu,\nu \in \sM$ with $\mu \leq_E \nu$. Then
	$$
	P_{T^\nu(\mu)}=(\tilde{P}^{\nu}_\mu)^c.
	$$
	In particular, $T^\nu(\mu)$ corresponds to the second derivative of $(\tilde{P}^{\nu}_\mu)^c$ in the sense of distributions.
\end{thm}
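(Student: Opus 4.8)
The plan is to exhibit $(\tilde{P}^\nu_\mu)^c$ as the modified potential $P_\eta$ of a measure $\eta$ that lies in $\sM^\nu_\mu$ and that dominates in convex order every other element of $\sM^\nu_\mu$; since \cref{lem:max} guarantees such a measure is unique, it must equal $T^\nu(\mu)$, and the theorem follows. The computation that drives everything is an algebraic simplification of the minimum. Writing $Q(k):=C_\nu(k)+(\mu(\R)k-\overline\mu)$ so that $\tilde{P}^\nu_\mu=\min\{P_\nu,Q\}$, and using $C_\nu-P_\nu=\overline\nu-\nu(\R)k$, the two functions in the minimum differ by an affine function:
$$ P_\nu(k)-Q(k)=(\nu(\R)-\mu(\R))k-(\overline\nu-\overline\mu)=:h(k). $$
Because $\mu\leq_E\nu$ forces $\mu(\R)\leq\nu(\R)$, the slope of $h$ is non-negative, so $h^+:=\max\{0,h\}$ is convex; and since $\min\{a,b\}=a-(a-b)^+$, we obtain the decomposition $\tilde{P}^\nu_\mu=P_\nu-h^+$. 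This is the identity that makes the earlier lemmas directly applicable.

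Next I would check the three conditions that place the candidate in $\sM^\nu_\mu$. Convexity of $P_\nu-(\tilde{P}^\nu_\mu)^c=P_\nu-(P_\nu-h^+)^c$ is immediate from \cref{lem:diff} with $g=P_\nu$ and $f=h^+$ (both convex), and this is precisely the statement $\eta\leq\nu$. For the convex-order lower bound I would first note $P_\mu\leq\tilde{P}^\nu_\mu$: indeed $\mu\leq_E\nu$ gives $P_\mu\leq P_\nu$ (integrate $(k-\cdot)^+$), while $Q-P_\mu=C_\nu-C_\mu\geq0$ (integrate $(\cdot-k)^+$); since $P_\mu$ is convex this upgrades to $P_\mu\leq(\tilde{P}^\nu_\mu)^c$, i.e.\ $\mu\leq_{cx}\eta$ once $\eta$ is identified. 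To identify $\eta$, I must verify $(\tilde{P}^\nu_\mu)^c\in\sD(\mu(\R),\overline\mu)$: it is convex by construction, and its limits are pinned by squeezing. From $0\leq(\tilde{P}^\nu_\mu)^c\leq\tilde{P}^\nu_\mu\leq P_\nu\to0$ the value tends to $0$ at $-\infty$; from $P_\mu\leq(\tilde{P}^\nu_\mu)^c\leq Q$ together with $P_\mu-(\mu(\R)k-\overline\mu)=C_\mu\to0$ and $Q-(\mu(\R)k-\overline\mu)=C_\nu\to0$, the difference $(\tilde{P}^\nu_\mu)^c-(\mu(\R)k-\overline\mu)$ tends to $0$ at $+\infty$. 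The converse correspondence between elements of $\sD$ and measures recalled in \cref{prelims:convex} then produces a unique $\eta\in\sM$ with $P_\eta=(\tilde{P}^\nu_\mu)^c$, $\eta(\R)=\mu(\R)$ and $\overline\eta=\overline\mu$; combined with the two conditions above this gives $\mu\leq_{cx}\eta\leq\nu$, i.e.\ $\eta\in\sM^\nu_\mu$.

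For maximality, let $\tilde\eta$ be any measure with $\mu\leq_{cx}\tilde\eta\leq\nu$; then $\tilde\eta\leq_E\nu$ with $\tilde\eta(\R)=\mu(\R)$ and $\overline{\tilde\eta}=\overline\mu$, and $P_\nu-P_{\tilde\eta}$ is convex. Applying \cref{cor:CSpotential} to the pair $(\tilde\eta,\nu)$ shows $(P_\nu-P_{\tilde\eta})^c\in\sD(\nu(\R)-\mu(\R),\overline\nu-\overline\mu)$, and as $P_\nu-P_{\tilde\eta}$ is already convex it equals its own hull and hence lies in that class. A function in $\sD(\alpha,\beta)$ has non-decreasing derivative that cannot exceed its asymptotic slope $\alpha$ (else it would overshoot its asymptote), so it stays above its linear asymptote $\alpha k-\beta$; here this asymptote is exactly $h$, giving $P_\nu-P_{\tilde\eta}\geq h$, that is $P_{\tilde\eta}\leq P_\nu-h=Q$. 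Together with $P_{\tilde\eta}\leq P_\nu$ we get $P_{\tilde\eta}\leq\min\{P_\nu,Q\}=\tilde{P}^\nu_\mu$, and convexity of $P_{\tilde\eta}$ upgrades this to $P_{\tilde\eta}\leq(\tilde{P}^\nu_\mu)^c=P_\eta$, i.e.\ $\tilde\eta\leq_{cx}\eta$.

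Thus $\eta$ satisfies all three properties of \cref{defn:maximal}, so $\eta=T^\nu(\mu)$ by \cref{lem:max} and $P_{T^\nu(\mu)}=(\tilde{P}^\nu_\mu)^c$; the final assertion is then immediate, since $\eta$ is recovered as the distributional second derivative of its modified potential. I expect the genuine obstacle to be the asymptotic bookkeeping of the second paragraph: a priori the convex hull could lower $\tilde{P}^\nu_\mu$ enough to change its mass (the $+\infty$ slope) or its barycentre (the $+\infty$ intercept), and the point of the two-sided squeeze by the explicit convex bounds $P_\mu$ and $Q$ is precisely to rule this out. Everything else reduces to \cref{lem:diff} and \cref{cor:CSpotential}, which carry the analytic work.
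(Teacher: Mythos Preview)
Your argument is essentially the paper's: the same decomposition $\tilde{P}^\nu_\mu=P_\nu-h^+$, the same application of \cref{lem:diff} to get $\eta\leq\nu$, the same squeeze $P_\mu\leq(\tilde P^\nu_\mu)^c\leq\tilde P^\nu_\mu$ to pin down the class $\sD(\mu(\R),\overline\mu)$. The only structural difference is that the paper invokes the prior existence of $T^\nu(\mu)$ (via \cref{lem:max}) and checks $P_{T^\nu(\mu)}\leq\tilde P^\nu_\mu$ for that specific measure, whereas you verify the inequality for an arbitrary $\tilde\eta\in\sM^\nu_\mu$ and thereby establish property~3 of \cref{defn:maximal} directly; your version is marginally cleaner in that it yields existence as a by-product rather than importing it.

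There is, however, one citation you should replace. In the maximality step you appeal to \cref{cor:CSpotential} to conclude $P_\nu-P_{\tilde\eta}\in\sD(\nu(\R)-\mu(\R),\overline\nu-\overline\mu)$. In the paper's logical order, \cref{cor:CSpotential} is deduced from \cref{corE}, whose proof uses \cref{lem:order}, whose implication (i)$\Rightarrow$(ii) uses \cref{thm:maximal}; so the reference is circular. The fix is immediate and avoids the detour entirely: since $\tilde\eta\leq\nu$, the difference $\nu-\tilde\eta$ is itself a measure in $\sM$ with mass $\nu(\R)-\mu(\R)$ and first moment $\overline\nu-\overline\mu$, and $P_\nu-P_{\tilde\eta}=P_{\nu-\tilde\eta}$ lies in the corresponding $\sD$-class by definition. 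Equivalently, as the paper does, just use $C_{\tilde\eta}\leq C_\nu$ to get $P_{\tilde\eta}=C_{\tilde\eta}+(\mu(\R)k-\overline\mu)\leq Q$ directly, without ever mentioning $\sD$ at this point.
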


\begin{proof} For $\nu(\R) = \mu(\R)$ we must have $\bar{\nu} = \bar{\mu}$ and $T^\nu(\mu) = \nu$. The result can be verified directly in this case and we exclude it from this point onwards.
	
	Note first that since $\mu \leq_E \nu$, $P_\nu\geq P_\mu$ and $C_\nu(k) +  (\mu(\R)k-\overline{\mu}) \geq C_\mu(k)+ (\mu(\R)k-\overline{\mu}) = P_\mu(k)$. Then $\tilde{P}^{\nu}_\mu \geq P_\mu\geq 0$.
	
	Let $P_{\chi}=(\tilde{P}^{\nu}_\mu)^c$ be our candidate function. Since $P_\chi$ is convex, its second derivative (in the sense of distributions) corresponds to a measure, which we denote by $\chi$.
	
	We first show that $P_{T^\nu(\mu)}\leq P_\chi$. Since $P_{T^\nu(\mu)}$ is convex and $P_\chi$ is the largest convex function below $\tilde{P}^{\nu}_\mu$, it is enough to show that $P_{T^\nu(\mu)}\leq \tilde{P}^{\nu}_\mu$. Since $T^\nu(\mu)\leq\nu$, $P_{T^\nu(\mu)}\leq P_{\nu}$. On the other hand, since $\mu\leq_{cx}T^\nu(\mu)$, $\mu(\R)=T^\nu(\mu)(\R)$ and $\overline{\mu}=\overline{T^\nu(\mu)}$, and therefore, for all $k\in\R$, $P_{T^\nu(\mu)}(k)=C_{T^\nu(\mu)}(k)+ (\mu(\R) k-\overline\mu)\leq C_{\nu}(k)+(\mu(\R)k-\overline\mu)$, where we again used that $T^\nu(\mu)\leq\nu$. Combining both cases we conclude that $P_{T^\nu(\mu)}\leq \tilde{P}^{\nu}_\mu$ and thus $P_{T^\nu(\mu)}\leq P_\chi$.
	
	To finish the proof we will show that $\chi\in\sM^\nu_\mu$, or equivalently, that $P_\chi\in\sC(\mu,\nu)$. Then by the maximality of $T^\nu(\mu)$ we have that $\chi\leq_{cx}T^\nu(\mu)$, which implies that $P_\chi\leq P_{T^\nu(\mu)}$.
	
	First, we already saw that $P_\mu\leq\tilde{P}^\nu_\mu$. Then, since $P_\mu$ is convex, $P_\mu\leq  (\tilde{P}^\nu_\mu)^c = P_\chi\leq\tilde{P}^\nu_\mu$. Further, since $P_\mu$ is an element of $\sD(\mu(\R), \overline{\mu})$ and $\tilde{P}^\nu_\mu$ has the same limiting behaviour as an element of $\sD(\mu(\R), \overline{\mu})$, it follows that $P_{\chi} \in \sD(\mu(\R), \overline{\mu})$, and therefore $\mu\leq_{cx}\chi$.
	
	It is left to show that $\chi\leq\nu$, or equivalently, that $P_\nu-P_\chi$ is convex. But $\tilde{P}^\nu_\mu(k) = P_\nu(k) - ((\nu(\R)-\mu(\R))k - (\bar{\nu} - \bar{\mu}))^+$. But $p$ given by $p(k)=((\nu(\R)-\mu(\R))k - (\bar{\nu} - \bar{\mu}))^+$ is a convex function and therefore by Lemma~\ref{lem:diff}, with $g=P_\nu$ and $f=p$, $P_\nu - (P_\nu - p)^c = P_\nu - P_\chi$ is convex.
\end{proof}

\begin{figure}[H]\centering
	\begin{tikzpicture}[scale=0.8]
	
	\begin{axis}[%
	width=5.028in,
	height=3.5in,
	at={(1.011in,0.642in)},
	scale only axis,
	xmin=-3,
	xmax=3,
	ymin=-1,
	ymax=4,
	axis line style={draw=none},
	ticks=none
	]
	\addplot [color=black, line width=0.5pt, forget plot]
	table[row sep=crcr]{%
		-3	0\\
		-2.87486853728013	0\\
		-2.72727272727273	0\\
		-2.58450415319679	0\\
		-2.45454545454545	0\\
		-2.32075022301553	0\\
		-2.18181818181818	0\\
		-2.03135535207571	0\\
		-1.90909090909091	0\\
		-1.75711081622359	0\\
		-1.63636363636364	0\\
		-1.50480982849432	0\\
		-1.36363636363636	0\\
		-1.23254829617911	0\\
		-1.09090909090909	0\\
		-0.938793844083881	0\\
		-0.818181818181818	0\\
		-0.671931194594317	0\\
		-0.545454545454545	0\\
		-0.401923560569463	0\\
		-0.272727272727273	0\\
		-0.137628516354142	0\\
		0	0\\
		0.136399391257762	0\\
		0.272727272727273	0\\
		0.414511205435068	0\\
		0.545454545454545	0\\
		0.692747678649345	0\\
		0.818181818181818	0\\
		0.952205434434925	0\\
		1.09090909090909	0\\
		1.22816802555557	0\\
		1.36363636363636	0\\
		1.50010537849835	0\\
		1.63636363636364	0\\
		1.7826738865474	0\\
		1.90909090909091	0\\
		2.03902277357478	0\\
		2.18181818181818	0\\
		2.3249118553759	0\\
		2.45454545454545	0\\
		2.55747419264388	0\\
		2.72727272727273	0\\
		2.86110314258923	0\\
		3	0\\
	};
	\addplot [color=black, line width=0.5pt, forget plot]
	table[row sep=crcr]{%
		-3	3\\
		-2.87486853728013	2.87486853728013\\
		-2.72727272727273	2.72727272727273\\
		-2.58450415319679	2.58450415319679\\
		-2.45454545454545	2.45454545454545\\
		-2.32075022301553	2.32075022301553\\
		-2.18181818181818	2.18181818181818\\
		-2.03135535207571	2.03135535207571\\
		-1.90909090909091	1.90909090909091\\
		-1.75711081622359	1.75711081622359\\
		-1.63636363636364	1.63636363636364\\
		-1.50480982849432	1.50480982849432\\
		-1.36363636363636	1.36363636363636\\
		-1.23254829617911	1.23254829617911\\
		-1.09090909090909	1.09090909090909\\
		-0.938793844083881	0.938793844083881\\
		-0.818181818181818	0.818181818181818\\
		-0.671931194594317	0.671931194594317\\
		-0.545454545454545	0.545454545454545\\
		-0.401923560569463	0.401923560569463\\
		-0.272727272727273	0.272727272727273\\
		-0.137628516354142	0.137628516354142\\
		-0.068814258177071	0.068814258177071\\
		-0.0344071290885355	0.0344071290885355\\
		-0.0172035645442677	0.0172035645442677\\
		-0.00860178227213387	0.00860178227213387\\
		-0.00430089113606694	0.00430089113606694\\
		-0.00215044556803347	0.00215044556803347\\
		-0.00107522278401673	0.00107522278401673\\
		-0.000537611392008367	0.000537611392008367\\
		-0.000268805696004184	0.000268805696004184\\
		0	0\\
		0.000266405061050316	0.000266405061050316\\
		0.000532810122100632	0.000532810122100632\\
		0.00106562024420126	0.00106562024420126\\
		0.00213124048840253	0.00213124048840253\\
		0.00426248097680505	0.00426248097680505\\
		0.00852496195361011	0.00852496195361011\\
		0.0170499239072202	0.0170499239072202\\
		0.0340998478144404	0.0340998478144404\\
		0.0681996956288808	0.0681996956288808\\
		0.136399391257762	0.136399391257762\\
		0.272727272727273	0.272727272727273\\
		0.414511205435068	0.414511205435068\\
		0.545454545454545	0.545454545454545\\
		0.692747678649345	0.692747678649345\\
		0.818181818181818	0.818181818181818\\
		0.952205434434925	0.952205434434925\\
		1.09090909090909	1.09090909090909\\
		1.22816802555557	1.22816802555557\\
		1.36363636363636	1.36363636363636\\
		1.50010537849835	1.50010537849835\\
		1.63636363636364	1.63636363636364\\
		1.7826738865474	1.7826738865474\\
		1.90909090909091	1.90909090909091\\
		2.03902277357478	2.03902277357478\\
		2.18181818181818	2.18181818181818\\
		2.3249118553759	2.3249118553759\\
		2.45454545454545	2.45454545454545\\
		2.55747419264388	2.55747419264388\\
		2.72727272727273	2.72727272727273\\
		2.86110314258923	2.86110314258923\\
		3	3\\
	};
	\addplot [color=black, line width=0.5pt, forget plot]
	table[row sep=crcr]{%
		-3	-0\\
		-2.87486853728013	-0\\
		-2.72727272727273	-0\\
		-2.58450415319679	-0\\
		-2.45454545454545	-0\\
		-2.32075022301553	-0\\
		-2.18181818181818	-0\\
		-2.03135535207571	-0\\
		-1.90909090909091	-0\\
		-1.75711081622359	-0\\
		-1.63636363636364	-0\\
		-1.50480982849432	-0\\
		-1.36363636363636	-0\\
		-1.23254829617911	-0\\
		-1.09090909090909	-0\\
		-0.938793844083881	-0\\
		-0.818181818181818	-0\\
		-0.671931194594317	-0\\
		-0.545454545454545	-0\\
		-0.401923560569463	-0\\
		-0.272727272727273	-0\\
		-0.137628516354142	-0\\
		0	-0\\
		0.136399391257762	-0\\
		0.272727272727273	-0\\
		0.414511205435068	-0\\
		0.447247040439937	-0\\
		0.479982875444807	-0\\
		0.488166834196024	-0\\
		0.492258813571633	-0\\
		0.496350792947241	-0\\
		0.498396782635046	-0\\
		0.499419777478948	-0\\
		0.499675526189923	-0\\
		0.499931274900899	-0\\
		0.500187023611874	3.27291320780187e-05\\
		0.50044277232285	7.74851564987411e-05\\
		0.500954269744801	0.000166997205340186\\
		0.501465767166752	0.00025650925418163\\
		0.502488762010654	0.0004355333518645\\
		0.504534751698459	0.00079358154723026\\
		0.512718710449676	0.0022257743286933\\
		0.529086627952111	0.00509015989161937\\
		0.545454545454545	0.00795454545454545\\
		0.619101112051945	0.0208426946090904\\
		0.692747678649345	0.0337308437636354\\
		0.818181818181818	0.0556818181818182\\
		0.952205434434925	0.0791359510261119\\
		1.09090909090909	0.103409090909091\\
		1.22816802555557	0.127429404472224\\
		1.36363636363636	0.151136363636364\\
		1.50010537849835	0.175018441237212\\
		1.63636363636364	0.198863636363636\\
		1.7826738865474	0.224467930145795\\
		1.90909090909091	0.246590909090909\\
		2.03902277357478	0.269328985375587\\
		2.18181818181818	0.294318181818182\\
		2.3249118553759	0.319359574690783\\
		2.45454545454545	0.342045454545454\\
		2.55747419264388	0.360057983712678\\
		2.72727272727273	0.389772727272727\\
		2.86110314258923	0.413193049953116\\
		3	0.4375\\
	};
	\addplot [color=black!60!green, line width=1.0pt, dotted, forget plot]
	table[row sep=crcr]{%
		-3	0\\
		-2.87486853728013	0\\
		-2.72727272727273	0\\
		-2.58450415319679	0\\
		-2.45454545454545	0\\
		-2.32075022301553	0\\
		-2.18181818181818	0\\
		-2.03135535207571	0\\
		-1.90909090909091	0.00103305785123967\\
		-1.75711081622359	0.00737439444944652\\
		-1.63636363636364	0.0165289256198347\\
		-1.50480982849432	0.0306516632444775\\
		-1.36363636363636	0.0506198347107438\\
		-1.23254829617911	0.0736227647121979\\
		-1.09090909090909	0.103305785123967\\
		-0.938793844083881	0.140769813169283\\
		-0.818181818181818	0.174586776859504\\
		-0.671931194594317	0.22047084398646\\
		-0.545454545454545	0.264462809917355\\
		-0.401923560569463	0.319231038282873\\
		-0.272727272727273	0.372933884297521\\
		-0.137628516354142	0.433553442887159\\
		0	0.5\\
		0.136399391257762	0.570525294870817\\
		0.272727272727273	0.645661157024794\\
		0.414511205435068	0.728733045146438\\
		0.545454545454545	0.809917355371901\\
		0.692747678649345	0.906361257608929\\
		0.818181818181818	0.992768595041322\\
		0.952205434434925	1.08943961588839\\
		1.09090909090909	1.19421487603306\\
		1.22816802555557	1.30263360015242\\
		1.36363636363636	1.41425619834711\\
		1.50010537849835	1.53134220757414\\
		1.63636363636364	1.65289256198347\\
		1.7826738865474	1.78857771649595\\
		1.90909090909091	1.91012396694215\\
		2.03902277357478	2.03902277357478\\
		2.18181818181818	2.18181818181818\\
		2.3249118553759	2.3249118553759\\
		2.45454545454545	2.45454545454545\\
		2.55747419264388	2.55747419264388\\
		2.72727272727273	2.72727272727273\\
		2.86110314258923	2.86110314258923\\
		3	3\\
	};
	\addplot [color=black!60!green, line width=1.0pt,  dotted, forget plot]
	table[row sep=crcr]{%
		-3	3\\
		-2.87486853728013	2.87486853728013\\
		-2.72727272727273	2.72727272727273\\
		-2.58450415319679	2.58450415319679\\
		-2.45454545454545	2.45454545454545\\
		-2.32075022301553	2.32075022301553\\
		-2.18181818181818	2.18181818181818\\
		-2.03135535207571	2.03135535207571\\
		-1.90909090909091	1.91012396694215\\
		-1.75711081622359	1.76448521067303\\
		-1.63636363636364	1.65289256198347\\
		-1.50480982849432	1.5354614917388\\
		-1.36363636363636	1.41425619834711\\
		-1.23254829617911	1.30617106089131\\
		-1.09090909090909	1.19421487603306\\
		-0.938793844083881	1.07956365725316\\
		-0.818181818181818	0.992768595041322\\
		-0.671931194594317	0.892402038580777\\
		-0.545454545454545	0.809917355371901\\
		-0.401923560569463	0.721154598852336\\
		-0.272727272727273	0.645661157024793\\
		-0.137628516354142	0.571181959241301\\
		0	0.5\\
		0.136399391257762	0.434125903613055\\
		0.272727272727273	0.372933884297521\\
		0.414511205435068	0.31422183971137\\
		0.545454545454545	0.264462809917355\\
		0.692747678649345	0.213613578959585\\
		0.818181818181818	0.174586776859504\\
		0.952205434434925	0.137234181453463\\
		1.09090909090909	0.103305785123967\\
		1.22816802555557	0.0744655745968492\\
		1.36363636363636	0.0506198347107438\\
		1.50010537849835	0.0312368290757843\\
		1.63636363636364	0.0165289256198347\\
		1.7826738865474	0.00590382994855165\\
		1.90909090909091	0.00103305785123964\\
		2.03902277357478	0\\
		2.18181818181818	0\\
		2.3249118553759	0\\
		2.45454545454545	0\\
		2.55747419264388	0\\
		2.72727272727273	0\\
		2.86110314258923	0\\
		3	0\\
	};
	\addplot [color=orange, loosely dash dot, line width=1.0pt, forget plot]
	table[row sep=crcr]{%
		-3	2.3875\\
		-2.87486853728013	2.28426654325611\\
		-2.72727272727273	2.1625\\
		-2.58450415319679	2.04471592638736\\
		-2.45454545454545	1.9375\\
		-2.32075022301553	1.82711893398781\\
		-2.18181818181818	1.7125\\
		-2.03135535207571	1.58836816546246\\
		-1.90909090909091	1.48853305785124\\
		-1.75711081622359	1.36949081783391\\
		-1.63636363636364	1.27902892561983\\
		-1.50480982849432	1.1846197717523\\
		-1.36363636363636	1.08811983471074\\
		-1.23254829617911	1.00297510905997\\
		-1.09090909090909	0.915805785123967\\
		-0.938793844083881	0.827774734538485\\
		-0.818181818181818	0.762086776859504\\
		-0.671931194594317	0.687314079526771\\
		-0.545454545454545	0.626962809917355\\
		-0.401923560569463	0.56331797575268\\
		-0.272727272727273	0.510433884297521\\
		-0.137628516354142	0.459596968879326\\
		0	0.4125\\
		0.136399391257762	0.370495797083163\\
		0.272727272727273	0.333161157024793\\
		0.414511205435068	0.299261300662507\\
		0.545454545454545	0.272417355371901\\
		0.692747678649345	0.24734442272322\\
		0.818181818181818	0.230268595041322\\
		0.952205434434925	0.216370132479575\\
		1.09090909090909	0.206714876033058\\
		1.22816802555557	0.201894979069073\\
		1.36363636363636	0.201756198347107\\
		1.50010537849835	0.206255270312996\\
		1.63636363636364	0.215392561983471\\
		1.7826738865474	0.230371760094347\\
		1.90909090909091	0.247623966942149\\
		2.03902277357478	0.269328985375587\\
		2.18181818181818	0.294318181818182\\
		2.3249118553759	0.319359574690783\\
		2.45454545454545	0.342045454545454\\
		2.55747419264388	0.360057983712678\\
		2.72727272727273	0.389772727272727\\
		2.86110314258923	0.413193049953116\\
		3	0.4375\\
	};
	\addplot [color=blue, line width=1.0pt, densely dashed, forget plot]
	table[row sep=crcr]{%
		-3	0\\
		-2.87486853728013	0\\
		-2.72727272727273	0\\
		-2.58450415319679	0\\
		-2.45454545454545	0\\
		-2.32075022301553	0\\
		-2.18181818181818	0\\
		-2.03135535207571	0\\
		-1.90909090909091	0.00103305785123967\\
		-1.75711081622359	0.00737439444944652\\
		-1.63636363636364	0.0165289256198347\\
		-1.50480982849432	0.0306516632444775\\
		-1.36363636363636	0.0506198347107438\\
		-1.23254829617911	0.0736227647121979\\
		-1.09090909090909	0.103305785123967\\
		-0.938793844083881	0.140769813169283\\
		-0.818181818181818	0.174586776859504\\
		-0.671931194594317	0.22047084398646\\
		-0.545454545454545	0.264462809917355\\
		-0.401923560569463	0.319231038282873\\
		-0.272727272727273	0.372933884297521\\
		-0.205177894540707	0.402673298780666\\
		-0.171403205447425	0.417970779630994\\
		-0.137628516354142	0.433553442887159\\
		-0.129026734082008	0.43756762022248\\
		-0.120424951809874	0.441600295222364\\
		-0.116124060673807	0.443623569346518\\
		-0.11182316953774	0.445651467886813\\
		-0.109672723969707	0.446667151313014\\
		-0.10859750118569	0.447175426565127\\
		-0.107522278401673	0.447683990843249\\
		-0.106984667009665	0.447938381367064\\
		-0.106715861313661	0.448065603725159\\
		-0.106447055617657	0.44819284414738\\
		-0.106178249921653	0.448320102633727\\
		-0.105909444225648	0.448322670670359\\
		-0.105640638529644	0.448228200585753\\
		-0.10537183283364	0.448133748565273\\
		-0.104834221441632	0.447944898716689\\
		-0.104296610049623	0.447756121124608\\
		-0.103221387265606	0.447378782709952\\
		-0.101070941697573	0.446624972958665\\
		-0.0989204961295395	0.445872319311415\\
		-0.0946196049934726	0.444370480329019\\
		-0.0860178227213387	0.44138067561265\\
		-0.068814258177071	0.435456559173606\\
		-0.0344071290885355	0.423830298270288\\
		0	0.4125\\
		0.0681996956288808	0.390916498731098\\
		0.136399391257762	0.370495797083163\\
		0.272727272727273	0.333161157024793\\
		0.414511205435068	0.299261300662507\\
		0.545454545454545	0.272417355371901\\
		0.692747678649345	0.24734442272322\\
		0.818181818181818	0.230268595041322\\
		0.952205434434925	0.216370132479575\\
		1.09090909090909	0.206714876033058\\
		1.22816802555557	0.201894979069073\\
		1.36363636363636	0.201756198347107\\
		1.50010537849835	0.206255270312996\\
		1.63636363636364	0.215392561983471\\
		1.7826738865474	0.230371760094347\\
		1.90909090909091	0.247623966942149\\
		2.03902277357478	0.269328985375587\\
		2.18181818181818	0.294318181818182\\
		2.3249118553759	0.319359574690783\\
		2.45454545454545	0.342045454545454\\
		2.55747419264388	0.360057983712678\\
		2.72727272727273	0.389772727272727\\
		2.86110314258923	0.413193049953116\\
		3	0.4375\\
	};
	\addplot [color=blue, line width=1.0pt, densely dashed,  forget plot]
	table[row sep=crcr]{%
		0.5	0.28125\\
		0.552138109466614	0.271162175914021\\
		0.613636363636364	0.260136880165289\\
		0.673123269501336	0.250371804405087\\
		0.727272727272727	0.242252066115703\\
		0.783020740410196	0.234658444355753\\
		0.840909090909091	0.22759555785124\\
		0.90360193663512	0.220891428079928\\
		0.954545454545455	0.216167355371901\\
		1.01787049324017	0.211199632323068\\
		1.06818181818182	0.207967458677686\\
		1.12299590479403	0.20516630621496\\
		1.18181818181818	0.202995867768595\\
		1.23643820992537	0.201755012644686\\
		1.29545454545455	0.201252582644628\\
		1.35883589829838	0.201682707866072\\
		1.40909090909091	0.202737603305785\\
		1.47002866891903	0.204863718531797\\
		1.52272727272727	0.207450929752066\\
		1.58253184976272	0.211228030766293\\
		1.63636363636364	0.215392561983471\\
		1.69265478485244	0.220522222508439\\
		1.75	0.2265625\\
		1.80683307969073	0.233359971333599\\
		1.86363636363636	0.240960743801653\\
		1.92271300226461	0.249721435398676\\
		1.97727272727273	0.25858729338843\\
		2.03864486610389	0.269262851568181\\
		2.09090909090909	0.278409090909091\\
		2.14675226434789	0.28818164626088\\
		2.20454545454545	0.298295454545455\\
		2.26173667731482	0.308303918530093\\
		2.31818181818182	0.318181818181818\\
		2.37504390770765	0.328132683848838\\
		2.43181818181818	0.338068181818182\\
		2.49278078606142	0.348736637560748\\
		2.54545454545455	0.357954545454545\\
		2.59959282232283	0.367428743906494\\
		2.65909090909091	0.377840909090909\\
		2.71871327307329	0.388274822787826\\
		2.77272727272727	0.397727272727273\\
		2.81561424693495	0.405232493213616\\
		2.88636363636364	0.417613636363636\\
		2.94212630941218	0.427372104147131\\
		3	0.4375\\
	};
	\addplot [color=red, line width=1.0pt, forget plot]
	table[row sep=crcr]{%
		-3	0\\
		-2.87486853728013	0\\
		-2.72727272727273	0\\
		-2.58450415319679	0\\
		-2.45454545454545	0\\
		-2.32075022301553	0\\
		-2.18181818181818	0\\
		-2.03135535207571	0\\
		-1.90909090909091	0.00103305785123967\\
		-1.75711081622359	0.00737439444944652\\
		-1.63636363636364	0.014738005050505\\
		-1.50480982849432	0.0227607940910199\\
		-1.36363636363636	0.0313702364554638\\
		-1.23254829617911	0.0393646223875158\\
		-1.09090909090909	0.0480024678604224\\
		-0.938793844083881	0.0572791931402932\\
		-0.818181818181818	0.0646346992653811\\
		-0.671931194594317	0.0735537713856795\\
		-0.545454545454545	0.0812669306703398\\
		-0.401923560569463	0.0900201460364377\\
		-0.272727272727273	0.0978991620752985\\
		-0.137628516354142	0.106138139263205\\
		0	0.114531393480257\\
		0.136399391257762	0.122849689689537\\
		0.272727272727273	0.131163624885216\\
		0.414511205435068	0.139810296538987\\
		0.545454545454545	0.147795856290175\\
		0.692747678649345	0.156778505700918\\
		0.818181818181818	0.164428087695133\\
		0.952205434434925	0.17260149762572\\
		1.09090909090909	0.181060319100092\\
		1.22816802555557	0.189431034432699\\
		1.36363636363636	0.197692550505051\\
		1.50010537849835	0.206015092699286\\
		1.63636363636364	0.215392561983471\\
		1.7826738865474	0.230371760094347\\
		1.90909090909091	0.247623966942149\\
		2.03902277357478	0.269328985375587\\
		2.18181818181818	0.294318181818182\\
		2.3249118553759	0.319359574690783\\
		2.45454545454545	0.342045454545454\\
		2.55747419264388	0.360057983712678\\
		2.72727272727273	0.389772727272727\\
		2.86110314258923	0.413193049953116\\
		3	0.4375\\
	};
	
	\draw [gray,->] (-0.5,1.5) to[out=230, in=350] (-1.8,1.5);
	
	\draw [gray, thin, densely dashed] (0,-0.2) -- (0,0.5);
	\draw [gray, thin, densely dashed] (0.5,0.25) -- (0.5,-0.2);
	\draw [gray, thin, densely dashed] (-1.7561,-0.2) -- (-1.7561,0);
	\draw [gray, thin, densely dashed] ( 1.5439,-0.2) -- ( 1.5439,0.2);
	\node (Cnu1)[scale=1.1,black!60!green] at (-2,2.7) {$k\mapsto C_{\nu}(k)$};
	\node (Cnu)[scale=1.1,orange] at (0,1.7) {$k\mapsto C_{\nu}(k)+(\mu(\R)k-\overline\mu)$};
	\node (Pnu)[scale=1.1,black!60!green] at ( 2,2.7) {$k\mapsto P_{\nu}(k)$};
	\node (Ptilde)[scale=1.1,blue] at (-1.1,0.5) {$k\mapsto \tilde{P}^{\nu}_\mu(k)$};
	\node (Phull)[scale=1.1,red] at ( 2,0.6) {$k\mapsto P_{T^\nu(\mu)}(k)$};
	\node (bmu)[scale=1.1] at ( 0.5,-0.4) {$ \frac{\overline\mu}{\mu(\R)}$};
	\node (bnu)[scale=1.1] at ( 0,-0.4) {$ \frac{\overline\nu}{\nu(\R)}$};
	\node (bmu1)[scale=1.1] at ( -1.7561,-0.3) {$k_*$};
	\node (bnu1)[scale=1.1] at ( 1.5439,-0.3) {$k^*$};
	\end{axis}
	\end{tikzpicture}
	\caption{Construction of $P_{T^\nu(\mu)}$. Dotted curves correspond to the graphs of $C_\nu$ and $P_\mu$, while the dash-dotted curve corresponds to $k\mapsto C_{\nu}(k)+(\mu(\R)k-\overline\mu)$. The dashed curve represents $\tilde{P}^\nu_\mu$. Note that $\tilde{P}^\nu_\mu$ is continuous. The solid curve below  $\tilde{P}^\nu_\mu$ corresponds to $P_{T^\nu(\mu)}$, the (modified) potential of the counter-shadow. Note that, on $(k_*,k^*)$, $P_{T^\nu(\mu)}$ is linear and strictly below $\tilde{P}^\nu_\mu$, while on $\R\setminus(k_*,k^*)$ it is equal to $\tilde{P}^\nu_\mu$.}
	\label{plot:LU}
\end{figure}
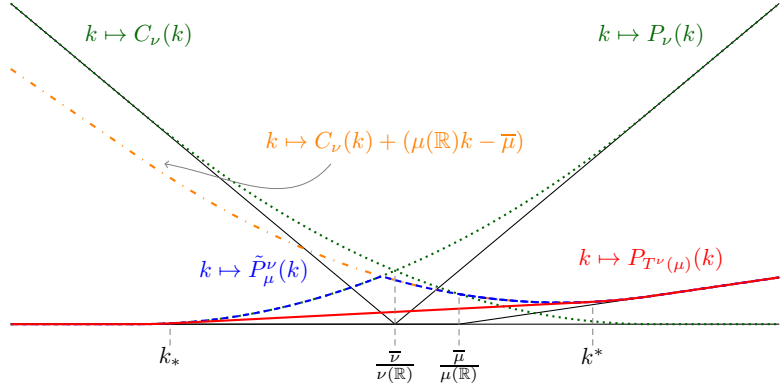

\begin{lem}\label{lem:order}
	Suppose $\mu, \nu \in \sM$. The following are equivalent:
	\begin{itemize}
		\item[(i)] $\mu \leq_E \nu$;
		\item[(ii)] there exists $\eta \in \sM$ such that $\mu \leq_{cx} \eta \leq \nu$;
		\item[(iii)] there exists $\chi \in \sM$ such that $\mu \leq \chi \leq_{cx} \nu$.
	\end{itemize}
\end{lem}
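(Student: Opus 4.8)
The plan is to establish the four implications $(ii)\Rightarrow(i)$, $(iii)\Rightarrow(i)$, $(i)\Rightarrow(ii)$ and $(i)\Rightarrow(iii)$, which together give the full equivalence. The two implications landing in $(i)$ are immediate from the definitions. For $(ii)\Rightarrow(i)$, take any non-negative convex $f$; applying $\mu\leq_{cx}\eta$ (using convexity of $f$) and then $\eta\leq\nu$ (using non-negativity of $f$) gives $\int f\,d\mu\leq\int f\,d\eta\leq\int f\,d\nu$, whence $\mu\leq_E\nu$. For $(iii)\Rightarrow(i)$ the same chain works, now using non-negativity of $f$ for the first inequality ($\mu\leq\chi$) and convexity for the second ($\chi\leq_{cx}\nu$).

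For $(i)\Rightarrow(ii)$ I would simply quote \cref{thm:maximal}: under $\mu\leq_E\nu$ the counter-shadow $T^\nu(\mu)$ exists and, by the very construction carried out in that theorem, satisfies $\mu\leq_{cx}T^\nu(\mu)\leq\nu$; so $\eta=T^\nu(\mu)$ witnesses $(ii)$ (and in particular $\sM^\nu_\mu\neq\emptyset$).

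The substantive step is $(i)\Rightarrow(iii)$, the construction of a measure dominating $\mu$ in the sense of $\leq$ while sitting below $\nu$ in convex order; I expect this to be the main obstacle, as it is the mirror image of $(ii)$ with the two orders interchanged. My candidate is the measure $\chi$ whose modified potential is
\[
P_\chi := P_\mu + (P_\nu - P_\mu)^c .
\]
Because $\mu\leq_E\nu$ forces $P_\nu\geq P_\mu$, the hull $(P_\nu-P_\mu)^c$ is trapped between the convex function $0$ and $P_\nu-P_\mu$, so it is finite, non-negative and convex; hence $P_\chi$ is non-negative, non-decreasing and convex and is therefore the modified potential of a genuine $\chi\in\sM$. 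The key quantitative input is \cref{cor:CSpotential}, which gives $(P_\nu-P_\mu)^c\in\sD(\nu(\R)-\mu(\R),\overline\nu-\overline\mu)$; adding $P_\mu\in\sD(\mu(\R),\overline\mu)$ and matching the limiting behaviour at $\pm\infty$ places $P_\chi\in\sD(\nu(\R),\overline\nu)$, so that $\chi(\R)=\nu(\R)$ and $\overline\chi=\overline\nu$.

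It then remains only to read off the two orderings. By construction $P_\chi-P_\mu=(P_\nu-P_\mu)^c$ is convex, which is exactly the curvature criterion for $\mu\leq\chi$. Moreover $(P_\nu-P_\mu)^c\leq P_\nu-P_\mu$ gives $P_\chi\leq P_\nu$, and since $\chi$ and $\nu$ already share total mass and barycentre, the potential characterisation of convex order from \cref{sec:prelims} yields $\chi\leq_{cx}\nu$. Thus $\mu\leq\chi\leq_{cx}\nu$, proving $(iii)$. The only genuine content is the verification that $P_\chi$ lands in $\sD(\nu(\R),\overline\nu)$, and this is precisely where \cref{cor:CSpotential} does the work.
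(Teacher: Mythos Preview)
Your argument is sound in substance, but as written there is a circularity in the $(i)\Rightarrow(iii)$ step: in this paper \cref{cor:CSpotential} is obtained by combining \cref{CSpotential} with \cref{corE}, and \cref{corE} is itself a corollary of \cref{lem:order}. So quoting \cref{cor:CSpotential} here is a self-reference. The repair is immediate: apply \cref{CSpotential} directly, for which one only needs $P_\nu-P_\mu\geq h$ with $h(k)=(\nu(\R)-\mu(\R))k-(\overline\nu-\overline\mu)$. Using $P_\eta(k)-(\eta(\R)k-\overline\eta)=C_\eta(k)$ this rearranges to $C_\nu\geq C_\mu$, which follows at once from $\mu\leq_E\nu$ because $x\mapsto(x-k)^+$ is non-negative and convex. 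With that adjustment your proof goes through.

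For comparison, the paper takes a shorter route for the passage between $(ii)$ and $(iii)$: having produced $\eta$ with $\mu\leq_{cx}\eta\leq\nu$ from \cref{thm:maximal}, it simply sets $\chi:=\nu-\eta+\mu$ and checks in one line that $\mu\leq\chi\leq_{cx}\nu$ (indeed $\chi-\mu=\nu-\eta\geq0$, and for convex $f$ one has $\int f\,d\chi=\int f\,d\nu-\int f\,d\eta+\int f\,d\mu\leq\int f\,d\nu$ since $\mu\leq_{cx}\eta$). Your potential-theoretic candidate $P_\chi=P_\mu+(P_\nu-P_\mu)^c$ is in fact exactly this construction with the particular choice $\eta=S^\nu(\mu)$ (cf.\ \cref{thm:shadow_potential}), so the two routes land on the same object; the paper's algebraic trick just avoids the detour through the convex-hull asymptotics.
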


\begin{proof}
	
	That (i) implies (ii) follows from \cref{thm:maximal} (see also the paragraph below \cref{lem:max}). (The {\em shadow} of $\mu$ in $\nu$, see Definition~\ref{defn:shadow} and Theorem~\ref{thm:shadow_potential} below, is another measure that satisfies (ii).)
	
	For (ii) implies (iii), let $\eta$ be as in part (ii). Then it is easily verified that $\chi$, defined by $\chi=\nu-\eta+\mu$, satisfies $\chi \in \sM$ and $\mu\leq\chi\leq_{cx}\nu.$ (To prove (iii) implies (ii) reverse the roles of $\eta$ and $\chi$, i.e., take $\chi$ as in part (iii) and define $\eta=\nu-\chi+\mu$.)
	
	Finally we show that (iii) implies (i). Suppose $\mu \leq \chi \leq_{cx} \nu$. Then, for any non-negative and convex $f:\R\mapsto\R_+$,
	\begin{equation*}
	\int fd\mu \leq\int fd\chi \leq \int fd\nu,
	\end{equation*}
	and thus $\mu\leq_E\nu$. (The proof of (ii) implies (i) is identical.)
\end{proof}
\begin{cor}\label{corE}
	Let $\mu,\nu\in\sM$ with $\mu\leq_E\nu$. Define $\beta\in\sM$ by $\beta:=(\nu(\R)-\mu(\R))\delta_{ \frac{\overline\nu-\overline\mu}{\nu(\R)-\mu(\R)}}$. Then
	$$
	(\nu-\mu)\geq_{ cx} \beta.
	$$
	It follows that $(P_\nu-P_\mu)\geq P_\beta$ where $P_\beta(k) = ((\nu(\R)-\mu(\R))k - (\overline\nu- \overline\mu))^+$.
\end{cor}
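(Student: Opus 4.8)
The plan is to prove the convex-order inequality $\nu-\mu\geq_{cx}\beta$ directly from the definition, i.e.\ to show that $\int f\,d(\nu-\mu)\geq\int f\,d\beta$ for every convex $f:\R\mapsto\R$, and then to read off the potential inequality by specialising to the convex functions $x\mapsto(k-x)^+$. Note first that $\nu-\mu$ is in general a signed measure, but since $P_\nu,P_\mu<\infty$ the quantities $\int f\,d(\nu-\mu)$ and $P_\nu-P_\mu$ are well defined, so both assertions make sense.

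First I would record the elementary bookkeeping. Applying $\leq_E$ to the constant function $1$ gives $\mu(\R)\leq\nu(\R)$, so $m:=\nu(\R)-\mu(\R)\geq0$, and $\nu-\mu$ and $\beta$ share both total mass $m$ and first moment $\overline\nu-\overline\mu$; in particular, when $m>0$, $\beta$ is the point mass of size $m$ at its own barycentre $b:=(\overline\nu-\overline\mu)/m$. The degenerate case $m=0$ forces $\mu\leq_{cx}\nu$ and $\overline\mu=\overline\nu$, so $\beta$ is the zero measure and the claim reduces to $\int f\,d(\nu-\mu)\geq0$, which is exactly $\mu\leq_{cx}\nu$; I would dispose of this at the outset and assume $m>0$.

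The key step is a supporting-line device. Fix a convex $f$ and let $\ell(x)=f(b)+p(x-b)$ be a supporting affine function of $f$ at $b$ (with $p$ a subgradient of $f$ at $b$, which exists since $f$ is real-valued convex), so that $g:=f-\ell$ is convex and non-negative. Because $g\geq0$ is convex, $\mu\leq_E\nu$ yields $\int g\,d\mu\leq\int g\,d\nu$, that is $\int g\,d(\nu-\mu)\geq0$. On the other hand, since $\ell$ is affine and $\nu-\mu$ has mass $m$ and first moment $\overline\nu-\overline\mu=bm$, a direct computation gives $\int\ell\,d(\nu-\mu)=f(b)\,m=\int f\,d\beta$. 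Adding the two contributions, $\int f\,d(\nu-\mu)=\int g\,d(\nu-\mu)+\int\ell\,d(\nu-\mu)\geq\int f\,d\beta$, which is precisely $\nu-\mu\geq_{cx}\beta$.

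Finally I would apply this inequality to $f(x)=(k-x)^+$ for each $k\in\R$: the left-hand side becomes $P_\nu(k)-P_\mu(k)$ and the right-hand side becomes $m(k-b)^+=((\nu(\R)-\mu(\R))k-(\overline\nu-\overline\mu))^+=P_\beta(k)$, giving $(P_\nu-P_\mu)\geq P_\beta$. The only genuine obstacle is that $\nu-\mu$ is signed, so the usual Jensen argument (which would immediately place a positive measure above the point mass at its barycentre) is unavailable; the supporting-line trick is exactly what lets me exploit the weaker hypothesis $\leq_E$, which only provides non-negative convex test functions, by converting an arbitrary convex $f$ into a non-negative one at the cost of an affine term that the difference $\nu-\mu$ cannot see.
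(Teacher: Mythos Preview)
Your proof is correct, but it takes a genuinely different route from the paper's.

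The paper proves \cref{corE} by invoking \cref{lem:order}: from $\mu\leq_E\nu$ it extracts a measure $\eta$ with $\mu\leq_{cx}\eta\leq\nu$ (whose existence ultimately rests on the construction of the counter-shadow $T^\nu(\mu)$ in \cref{thm:maximal}). Then $\nu-\eta$ is a \emph{positive} measure with total mass $\nu(\R)-\mu(\R)$ and first moment $\overline\nu-\overline\mu$, so the standard fact that a point mass is minimal in convex order among positive measures with prescribed mass and mean gives $\beta\leq_{cx}\nu-\eta$; and $\mu\leq_{cx}\eta$ immediately yields $\nu-\eta\leq_{cx}\nu-\mu$.

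Your argument bypasses \cref{lem:order} and \cref{thm:maximal} entirely. By subtracting a supporting affine function $\ell$ of $f$ at the barycentre $b$, you turn an arbitrary convex test function into a non-negative convex one, to which the hypothesis $\mu\leq_E\nu$ applies directly; the affine remainder integrates to exactly $mf(b)=\int f\,d\beta$ because $\nu-\mu$ has the right mass and first moment. This is more elementary and fully self-contained, and it also makes transparent why the signedness of $\nu-\mu$ is no obstacle: the Jensen step is replaced by the supporting-line inequality, which needs only $\leq_E$. The paper's approach, on the other hand, highlights the structural reason behind the result (the intermediary $\eta$ makes $\nu-\eta$ positive), at the cost of relying on the earlier machinery.
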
\begin{proof}
	Let $\eta\in\sM$ be as in (ii) of \cref{lem:order}. Since $\mu\leq_{cx}\eta$, we have that $\mu(\R)=\eta(\R)$ and $\overline\mu=\overline\eta$, and therefore $(\nu-\eta)(\R)=\beta(\R)$ and $\overline{(\nu-\eta)}= \overline{\nu} - \overline{\mu}=\overline\beta$. Since a point mass is smaller in convex order than any other distribution with the same mass and mean, it follows that $\beta\leq_{cx}\nu-\eta\leq_{cx} \nu-\mu$.
\end{proof}

Let $\mu=\mu_1+\mu_2$ for some $\mu_1,\mu_2\in\sM$ and $\nu\in\sM$ with $\mu\leq_E\nu$. Then $\sM^\nu_{\mu_1}\neq\emptyset$ and, in particular, we can embed $\mu_1$ into $\nu$ using any martingale coupling $\pi\in\Pi_M(\mu_1,T^\nu(\mu_1))$. A natural question is then whether $\sM_{\mu_2}^{\nu-T^\nu(\mu_1)}$ is non-empty, so that the remaining mass $\mu_2$ can also be embedded in what remains of $\nu$.
\begin{eg}\label{eg1}
	Let $\mu=\frac{1}{2}(\delta_{-1}+\delta_1)$ and $\nu=\frac{1}{3}(\delta_{-2}+\delta_0+\delta_1)$. Then $\mu\leq_{cx}\nu$. Consider $\mu_1=\frac{2}{3}\mu$ and $\mu_2=\mu-\mu_1=\frac{1}{3}\mu$. Then $T^\nu(\mu_1)=\frac{1}{3}(\delta_{-2}+\delta_2)$. However, $\mu_2\leq_{cx}\nu-T^\nu(\mu_1)$ does not hold. Indeed, $\nu-T^\nu(\mu_1)=\frac{1}{3}\delta_0\leq_{cx}\mu_2$.
\end{eg}
As \cref{eg1} demonstrates, for $\mu_1,\mu_2,\nu\in\sM$ with $\mu_1+\mu_2=\mu\leq_E\nu$,
if we first transport $\mu_1$ to $T^\nu(\mu_1)$, then we cannot,
in general, embed $\mu_2$ in $\nu - T^\nu(\mu_1)$ in a way which respects the martingale property.
As a consequence, for arbitrary measures in convex order we cannot expect the maximal element to \textit{induce} a martingale coupling. In the next section we study the minimal element of $\sM^\nu_\mu$, namely the shadow measure. The shadow measure has the property that if $\mu_1+\mu_2=\mu\leq_E\nu$ and we transport $\mu_1$ to the shadow $S^{\nu}(\mu_1)$ of $\mu_1$ in $\nu$, then $\mu_2$ is in extended convex order with what remains of $\nu$, i.e., $\mu_2 \leq_E \nu - S^\nu(\mu_1)$.

\section{The shadow measure}\label{sec:shadow}

\begin{defn}[Shadow measure]\label{defn:shadow}
	Let $\mu,\nu\in\sM$ and assume $\mu\leq_E\nu$. The shadow of $\mu$ in $\nu$, denoted by $S^\nu(\mu)$, has the following properties
	\begin{enumerate}
		\item\label{s1} $\mu\leq_{cx}S^\nu(\mu)$,
		\item\label{s2} $S^\nu(\mu)\leq\nu$,
		\item \label{s3}If $\eta$ is another measure satisfying $\mu \leq_{cx} \eta\leq \nu$, then $S^\nu(\mu)\leq_{cx}\eta$.
	\end{enumerate}
	\label{lem:shadowDef}
\end{defn}

\begin{rem}\label{rem:Ecx}
	If $\mu \leq_{cx} \nu$ then, in the light of \cref{rem:Ecx0}, $S^{\nu}(\mu) = \nu=T^\nu(\mu)$.
\end{rem}

\begin{prop}[Beiglb\"{o}ck and Juillet \cite{BeiglbockJuillet:16}, Lemma 4.6]
	For $\mu,\nu\in\sM$ with $\mu\leq_E\nu$, $S^\nu(\mu)$ exists and is unique.
\end{prop}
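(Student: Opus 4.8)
The plan is to mirror the proof of \cref{thm:maximal}, but now constructing the \emph{smallest} rather than the largest element of $\sM^\nu_\mu$, and to read off existence and uniqueness from this explicit construction. Concretely, I would take as candidate the measure $\chi\in\sM$ whose modified potential is
$$
P_\chi := P_\nu - (P_\nu - P_\mu)^c.
$$
The first task is to check that this formula genuinely defines a measure with the correct total mass and barycentre. Convexity of $P_\chi$ is immediate from \cref{lem:diff} applied with $g=P_\nu$ and $f=P_\mu$ (both convex), which gives that $P_\nu-(P_\nu-P_\mu)^c$ is convex. For the limiting behaviour, \cref{cor:CSpotential} yields $(P_\nu-P_\mu)^c\in\sD(\nu(\R)-\mu(\R),\overline{\nu}-\overline{\mu})$, and subtracting this from $P_\nu\in\sD(\nu(\R),\overline{\nu})$ shows $P_\chi\in\sD(\mu(\R),\overline{\mu})$; in particular $P_\chi$ is non-negative and non-decreasing. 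By the converse direction of the $\sD$-characterisation (Proposition 2.1 of \cite{peacock}) there is then a unique $\chi\in\sM$ with $\chi(\R)=\mu(\R)$, $\overline{\chi}=\overline{\mu}$ and modified potential $P_\chi$.

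Next I would verify the three defining properties of \cref{defn:shadow}. Since $\chi$ and $\mu$ share mass and barycentre, property \ref{s1} ($\mu\leq_{cx}\chi$) reduces to $P_\mu\leq P_\chi$, which holds because $(P_\nu-P_\mu)^c\leq P_\nu-P_\mu$ gives $P_\chi\geq P_\mu$. Property \ref{s2} ($\chi\leq\nu$) amounts to convexity of $P_\nu-P_\chi=(P_\nu-P_\mu)^c$, which is automatic. For the minimality property \ref{s3}, take any $\eta$ with $\mu\leq_{cx}\eta\leq\nu$. Then $\eta\leq\nu$ gives that $P_\nu-P_\eta$ is convex, while $\mu\leq_{cx}\eta$ gives $P_\mu\leq P_\eta$ and hence $P_\nu-P_\eta\leq P_\nu-P_\mu$. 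Thus $P_\nu-P_\eta$ is a convex minorant of $P_\nu-P_\mu$, so it lies below the largest such minorant, i.e.\ $P_\nu-P_\eta\leq(P_\nu-P_\mu)^c$; rearranging gives $P_\chi\leq P_\eta$, which (as $\eta$ and $\chi$ again share mass and mean) is exactly $\chi\leq_{cx}\eta$. This establishes existence with $S^\nu(\mu)=\chi$.

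Uniqueness is then a soft consequence of property \ref{s3} together with the antisymmetry of convex order among measures with common mass and barycentre: if $\eta_1$ and $\eta_2$ both satisfy \ref{s1}--\ref{s3}, applying \ref{s3} to each (with the other in the role of $\eta$) gives $\eta_1\leq_{cx}\eta_2$ and $\eta_2\leq_{cx}\eta_1$; since both are larger than $\mu$ in convex order they have equal mass and first moment, so these two convex-order inequalities force $P_{\eta_1}=P_{\eta_2}$ and hence $\eta_1=\eta_2$, as measures are determined by their modified potentials.

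The step I expect to be the main obstacle is the verification that $P_\chi$ lands in $\sD(\mu(\R),\overline{\mu})$ --- that is, pinning down the exact asymptotics of $(P_\nu-P_\mu)^c$ at $\pm\infty$ --- since without the mass-and-mean bookkeeping supplied by \cref{cor:CSpotential} one cannot conclude that the candidate potential corresponds to a measure of the correct total mass and barycentre. The degenerate case $\mu(\R)=\nu(\R)$ (which forces $\mu\leq_{cx}\nu$) is handled uniformly by the same formula, since there $(P_\nu-P_\mu)^c$ is the zero function and $P_\chi=P_\nu$, recovering $S^\nu(\mu)=\nu$ in line with \cref{rem:Ecx}.
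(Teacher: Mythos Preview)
Your proposal is correct and follows essentially the same route as the paper: you construct the explicit candidate $P_\chi=P_\nu-(P_\nu-P_\mu)^c$, use \cref{lem:diff} for convexity and \cref{cor:CSpotential} for the asymptotics to place $P_\chi$ in $\sD(\mu(\R),\overline\mu)$, and then read off properties \ref{s1}--\ref{s3} from the definition and maximality of the convex hull; this is exactly the content of the paper's proof of \cref{thm:shadow_potential}, which is how the paper itself establishes existence and uniqueness. The only cosmetic difference is that the paper packages the verification that $P_\chi\in\sD(\mu(\R),\overline\mu)$ via a direct appeal to \cref{CSpotential} (with $g=P_\nu$, $f=(P_\nu-P_\mu)^c$) rather than by subtracting $\sD$-classes, and folds uniqueness into the maximality property $3'$ rather than arguing separately via antisymmetry of $\leq_{cx}$.
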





Given $\mu$ and $\nu$ with $\mu \leq _E \nu$ (and, by \cref{rem:Ecx}, with $\mu(\R)<\nu(\R)$) our goal in this section is to construct the shadow measure ${S}^\nu(\mu)$. We do this by finding a corresponding (modified) potential function $P_{S^\nu(\mu)}$ (and then $S^\nu(\mu)$ can be identified as the second derivative of $P_{S^\nu(\mu)}$ in the sense of distributions).

\begin{thm}\label{thm:shadow_potential}
	Let $\mu,\nu\in\sM$ with $\mu\leq_{E}\nu$. Then the shadow of $\mu$ in $\nu$ is uniquely defined and given by
	\begin{equation}\label{eq:shadow_potential}
	P_{S^\nu(\mu)}=P_\nu-(P_\nu-P_{\mu})^c. 
	\end{equation}
\end{thm}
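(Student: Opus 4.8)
The plan is to exhibit the measure $\chi$ whose modified potential is the right-hand side of \eqref{eq:shadow_potential} and to verify directly that it satisfies the three defining properties of the shadow in \cref{defn:shadow}; uniqueness will then be immediate from antisymmetry of the convex order. Concretely, I would set $P_\chi := P_\nu - (P_\nu - P_\mu)^c$ and first check that this is a legitimate modified potential. Convexity of $P_\chi$ is exactly the content of \cref{lem:diff} applied with $g = P_\nu$ and $f = P_\mu$, both of which are convex. To pin down the total mass and barycentre of the associated measure I would invoke \cref{cor:CSpotential}, which gives $(P_\nu - P_\mu)^c \in \sD(\nu(\R) - \mu(\R), \overline\nu - \overline\mu)$; subtracting this from $P_\nu \in \sD(\nu(\R), \overline\nu)$ shows that $P_\chi$ has the limiting behaviour characterising $\sD(\mu(\R), \overline\mu)$. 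Together with convexity (a convex function vanishing at $-\infty$ is automatically non-negative and non-decreasing), this places $P_\chi \in \sD(\mu(\R), \overline\mu)$, so by the converse potential characterisation there is a unique $\chi \in \sM$ with $\chi(\R) = \mu(\R)$, $\overline\chi = \overline\mu$, and $P_\chi$ as its modified potential.

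Next I would verify the three properties of \cref{defn:shadow} for $\chi$. For property~\ref{s1}, since the convex hull lies below the function, $(P_\nu - P_\mu)^c \leq P_\nu - P_\mu$, whence $P_\chi \geq P_\mu$; as $\chi$ and $\mu$ share total mass and barycentre, this gives $\mu \leq_{cx} \chi$. For property~\ref{s2}, observe that $P_\nu - P_\chi = (P_\nu - P_\mu)^c$ is convex, which is precisely the criterion $\chi \leq \nu$. The heart of the argument is the minimality property~\ref{s3}: given any $\eta$ with $\mu \leq_{cx} \eta \leq \nu$, the measure $\eta$ again has total mass $\mu(\R)$ and barycentre $\overline\mu$, and its two defining relations translate into $P_\nu - P_\eta$ being convex and $P_\eta \geq P_\mu$, i.e. $P_\nu - P_\eta \leq P_\nu - P_\mu$. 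Thus $P_\nu - P_\eta$ is a convex function lying below $P_\nu - P_\mu$, so by maximality of the convex hull $P_\nu - P_\eta \leq (P_\nu - P_\mu)^c = P_\nu - P_\chi$; rearranging gives $P_\chi \leq P_\eta$, hence $\chi \leq_{cx} \eta$.

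Having shown that $\chi$ satisfies \cref{defn:shadow}, I would close with uniqueness: if $S_1$ and $S_2$ both satisfy the three properties, then applying property~\ref{s3} to each with the other playing the role of $\eta$ yields $S_1 \leq_{cx} S_2$ and $S_2 \leq_{cx} S_1$; since these measures then share mass and barycentre and have $P_{S_1} \leq P_{S_2} \leq P_{S_1}$, antisymmetry forces $S_1 = S_2$. This also justifies calling the shadow ``uniquely defined'' and identifies it with $\chi$.

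I expect no serious obstacle, since the whole proof is a dictionary translation between the measure-level relations ($\leq$ and $\leq_{cx}$) and the corresponding potential-level relations (convexity of a difference, and pointwise ordering), with \cref{lem:diff} and \cref{cor:CSpotential} doing the analytic work. The only point requiring genuine care is confirming that $P_\chi$ really lands in $\sD(\mu(\R),\overline\mu)$ --- in particular its asymptotics at $-\infty$ and $+\infty$ --- which is exactly where \cref{cor:CSpotential} is indispensable. I would also note the consistency of the degenerate case $\mu(\R) = \nu(\R)$: there $\mu \leq_{cx} \nu$, the convex hull $(P_\nu - P_\mu)^c$ is the zero function, and the formula collapses to $P_{S^\nu(\mu)} = P_\nu$, in agreement with \cref{rem:Ecx}.
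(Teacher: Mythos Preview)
Your proof is correct and follows essentially the same approach as the paper: both define the candidate $P_\chi = P_\nu - (P_\nu - P_\mu)^c$, use \cref{lem:diff} for its convexity and \cref{cor:CSpotential} (or \cref{CSpotential}) for its asymptotics to place it in $\sD(\mu(\R),\overline\mu)$, and then verify the three defining properties of the shadow via the fact that the convex hull is the largest convex minorant. The only cosmetic differences are that the paper first restates the defining properties in an equivalent primed form before checking them, and that you handle uniqueness explicitly via antisymmetry whereas the paper leaves it implicit.
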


\begin{proof}
	Rephrasing \cref{defn:shadow} above for the shadow measure (and splitting the first element into two parts), a function $h$ is the potential of the shadow of $\mu$ in $\nu$ if
	\begin{enumerate}
		\item[0] $h\in \sD(\mu(\R), \overline\mu)$,
		\item[1] $P_\mu \leq  h$,
		\item[2] $P_\nu-h$ is a potential function, i.e., $P_\nu-h\in \sD(\alpha, \beta)$ for some $\alpha\geq 0, \beta\in \R$,
		\item[3] If $p$ is another potential function satisfying properties 0,1,2 then $h \leq p$.
	\end{enumerate}
	Equivalently we can write this as
	\begin{enumerate}
		\item[0] $h\in \sD(\mu(\R), \overline\mu)$,
		\item[$1'$] $(P_\nu-h) \leq (P_\nu- P_\mu)$,
		\item[2] $P_\nu-h$ is a potential function, i.e., $P_\nu-h\in \sD( \alpha, \beta)$ for some $\alpha\geq 0, \beta\in \R$.
		\item[$3'$] If $p$ is another potential function with properties 0,$1'$,2 then $(P_\nu-h) \geq (P_\nu-p)$.
	\end{enumerate}
	By \cref{cor:CSpotential} with $g=P_\nu$ and $f= P_\mu$ we have $(P_\nu-P_\mu)^c\in\sD(\nu(\R)-\mu(\R),\overline\nu-\overline\mu)$. Now
	set $h=P_\nu-(P_\nu-P_\mu)^c$. First we verify that $h\in \sD(\mu(\R), \overline\mu)$. By applying \cref{lem:diff}, with $g=P_\nu$ and $f=P_\mu$, we have that $h$ is convex and therefore $h=h^c$. Then, since $h\geq P_{\mu(\R)\delta_{ \overline\mu/\mu(\R)}}\geq0$, applying \cref{CSpotential} with $g=P_\nu$ and $f=(P_\nu-P_\mu)^c$ we conclude that $h\in \sD(\mu(\R), \overline\mu)$.
	
	Now we claim that $ P_\nu - h = (P_\nu-P_\mu)^c$ satisfies the properties $1', 2, 3'$. We already saw that  $(P_\nu-P_\mu)^c\in\sD((\nu(\R)-\mu(\R)),\overline\nu-\overline\mu)$, i.e., $(P_\nu-P_\mu)^c$ is a potential function, and thus property 2 is satisfied. On the other hand, properties $1'$ and $3'$ follow from the definition and the maximality of the convex hull, respectively.
\end{proof}
\begin{figure}\centering
	\begin{tikzpicture}[scale=0.8]
	
	\begin{axis}[%
	width=5.028in,
	height=2.754in,
	at={(1.011in,0.642in)},
	scale only axis,
	xmin=-2,
	xmax=2,
	ymin=-0.3,
	ymax=0.6,
	axis line style={draw=none},
	ticks=none
	]
	\addplot [color=black, line width=1.0pt, forget plot]
	table[row sep=crcr]{%
		-2	-0\\
		-1.91657902485342	-0\\
		-1.81818181818182	-0\\
		-1.72300276879786	-0\\
		-1.63636363636364	-0\\
		-1.54716681534369	-0\\
		-1.45454545454545	-0\\
		-1.35423690138381	-0\\
		-1.27272727272727	-0\\
		-1.17140721081572	-0\\
		-1.09090909090909	-0\\
		-1.00320655232955	-0\\
		-0.909090909090909	-0\\
		-0.821698864119408	-0\\
		-0.727272727272727	-0\\
		-0.625862562722587	-0\\
		-0.545454545454545	-0\\
		-0.447954129729544	-0\\
		-0.363636363636364	-0\\
		-0.267949040379642	-0\\
		-0.181818181818182	-0\\
		-0.0917523442360945	-0\\
		0	-0\\
		0.0909329275051746	-0\\
		0.181818181818182	-0\\
		0.276340803623378	-0\\
		0.363636363636364	-0\\
		0.46183178576623	-0\\
		0.545454545454545	-0\\
		0.634803622956617	-0\\
		0.681038175114672	-0\\
		0.727272727272727	-0\\
		0.750149216380473	-0\\
		0.773025705488219	-0\\
		0.784463950042092	-0\\
		0.790183072319028	-0\\
		0.793042633457496	-0\\
		0.795902194595965	-0\\
		0.797331975165199	-0\\
		0.798046865449816	-0\\
		0.798761755734433	-0\\
		0.799119200876741	-0\\
		0.79947664601905	-0\\
		0.799655368590204	-0\\
		0.799834091161358	-0\\
		0.800012813732513	5.12549300504261e-06\\
		0.800191536303667	7.66145214667446e-05\\
		0.800370258874821	0.000148103549928447\\
		0.800548981445975	0.000219592578390149\\
		0.800906426588284	0.000362570635313553\\
		0.801621316872901	0.000648526749160405\\
		0.803051097442135	0.00122043897685407\\
		0.804480878011369	0.00179235120454768\\
		0.807340439149838	0.002936175659935\\
		0.813059561426774	0.00522382457070956\\
		0.81877868370371	0.00751147348148415\\
		0.83006771187711	0.0120270847508441\\
		0.84135674005051	0.0165426960202041\\
		0.86393479639731	0.0255739185589239\\
		0.909090909090909	0.0436363636363637\\
		1.00007025233224	0.0800281009328947\\
		1.09090909090909	0.116363636363636\\
		1.18844925769827	0.155379703079307\\
		1.27272727272727	0.189090909090909\\
		1.35934851571652	0.223739406286608\\
		1.45454545454545	0.261818181818182\\
		1.54994123691727	0.299976494766908\\
		1.63636363636364	0.334545454545454\\
		1.70498279509592	0.361993118038367\\
		1.81818181818182	0.407272727272727\\
		1.90740209505949	0.442960838023795\\
		2	0.48\\
	};
	\addplot [color=blue, dotted, line width=1.0pt, forget plot]
	table[row sep=crcr]{%
		-2	0\\
		-1.95828951242671	0.000217470596700208\\
		-1.91657902485342	0.000869882386800831\\
		-1.86738042151762	0.00219849407460559\\
		-1.81818181818182	0.00413223140495868\\
		-1.77059229348984	0.00657848697578143\\
		-1.72300276879786	0.00959093326170628\\
		-1.63636363636364	0.0165289256198347\\
		-1.54716681534369	0.0256322366407475\\
		-1.45454545454545	0.0371900826446281\\
		-1.35423690138381	0.0521262474417983\\
		-1.27272727272727	0.0661157024793388\\
		-1.17140721081572	0.0858207512860222\\
		-1.09090909090909	0.103305785123967\\
		-1.00320655232955	0.124199647164843\\
		-0.956148730710229	0.135241716725331\\
		-0.932619819900569	0.140142510501985\\
		-0.909090909090909	0.144628098460432\\
		-0.887242897848034	0.14842145957491\\
		-0.865394886605159	0.151856829202395\\
		-0.843546875362283	0.154934196579808\\
		-0.821698864119408	0.157653549492937\\
		-0.798092329907738	0.160189415691246\\
		-0.774485795696068	0.162307302902322\\
		-0.750879261484398	0.16400723827613\\
		-0.727272727272727	0.165289245257122\\
		-0.701920186135192	0.166200629236389\\
		-0.676567644997657	0.166660684442144\\
		-0.651215103860122	0.166577149924312\\
		-0.625862562722587	0.166042287941816\\
		-0.605760558405577	0.165275538460369\\
		-0.585658554088566	0.164205789816786\\
		-0.565556549771556	0.162833007720519\\
		-0.545454545454545	0.161156970942289\\
		-0.521079441523295	0.158718375225911\\
		-0.496704337592045	0.155839398260542\\
		-0.447954129729544	0.150082747953079\\
		-0.363636363636364	0.141499614744913\\
		-0.267949040379642	0.133974695326544\\
		-0.224883611098912	0.131321746910707\\
		-0.203350896458547	0.130168796119287\\
		-0.197967717798456	0.129898881979581\\
		-0.192584539138364	0.12963601459072\\
		-0.189892949808319	0.12950744480856\\
		-0.187201360478273	0.129380474138188\\
		-0.18585556581325	0.129317847309433\\
		-0.184509771148228	0.129255527097664\\
		-0.183836873815716	0.129224395305755\\
		-0.183163976483205	0.12919343440887\\
		-0.182827527816949	0.129178093180007\\
		-0.182491079150693	0.129163054688346\\
		-0.182322854817565	0.129155193705176\\
		-0.182154630484438	0.12914756217806\\
		-0.18198640615131	0.128664917521026\\
		-0.181818181818182	0.128794489046391\\
		-0.181642271979154	0.128931038316866\\
		-0.181466362140127	0.129066860614731\\
		-0.181114542462072	0.129339066485143\\
		-0.180938632623044	0.129475145519364\\
		-0.180762722784017	0.129611227166013\\
		-0.180586812944989	0.129076459706221\\
		-0.180410903105962	0.129068401541556\\
		-0.180234993266934	0.12906035137767\\
		-0.180059083427907	0.129052732453386\\
		-0.179707263749852	0.129036976822835\\
		-0.179003624393742	0.12900537915547\\
		-0.177596345681522	0.128942589979329\\
		-0.176189066969301	0.128880143303639\\
		-0.173374509544861	0.128757325173834\\
		-0.170559952120421	0.128636542879416\\
		-0.164930837271541	0.128400214474608\\
		-0.15930172242266	0.128172017689958\\
		-0.136785263027138	0.127338706581615\\
		-0.0917523442360945	0.126052445259572\\
		-0.0458761721180472	0.125263316707812\\
		0	0.125000328594824\\
		0.0454664637525873	0.125258368839621\\
		0.0909329275051746	0.126033773611156\\
		0.136375554661678	0.127346215887764\\
		0.181818181818182	0.129132423042244\\
		0.22907949272078	0.131559631673975\\
		0.276340803623378	0.134545502657012\\
		0.363636363636364	0.141529051626606\\
		0.46183178576623	0.151661395328132\\
		0.503643165610388	0.156700513789471\\
		0.524548855532467	0.159092231062697\\
		0.545454545454545	0.161156792628065\\
		0.567791814830063	0.163000837224159\\
		0.590129084205581	0.164469876334686\\
		0.634803622956617	0.166894006137953\\
		0.727272727272727	0.17338863636204\\
		0.81877868370371	0.181908089106931\\
		0.909090909090909	0.192396158006623\\
		1.00007025233224	0.205082750663181\\
		1.09090909090909	0.219669797224893\\
		1.18844925769827	0.237706384939799\\
		1.27272727272727	0.255205316933984\\
		1.35934851571652	0.275043520188001\\
		1.45454545454545	0.299008002598171\\
		1.54994123691727	0.325295420873598\\
		1.63636363636364	0.350500024255046\\
		1.70498279509592	0.373595665740981\\
		1.81818181818182	0.41140499518274\\
		1.90740209505949	0.444032720470319\\
		2	0.479999524168838\\
	};
	\addplot [color=red, line width=1.0pt, forget plot]
	table[row sep=crcr]{%
		-2	0\\
		-1.95828951242671	0.000217470596700208\\
		-1.91657902485342	0.000869882386800831\\
		-1.86738042151762	0.00219849407460559\\
		-1.81818181818182	0.00413223140495868\\
		-1.77059229348984	0.00657848697578143\\
		-1.72300276879786	0.00949982660285597\\
		-1.63636363636364	0.0149147712655168\\
		-1.54716681534369	0.0204895714318846\\
		-1.45454545454545	0.0262784052903436\\
		-1.35423690138381	0.0325476885726322\\
		-1.27272727272727	0.0376420393151704\\
		-1.17140721081572	0.0439745418813163\\
		-1.09090909090909	0.0490056733399972\\
		-1.00320655232955	0.0544870808730611\\
		-0.909090909090909	0.060369307364824\\
		-0.821698864119408	0.0658313090513794\\
		-0.727272727272727	0.0717329413896508\\
		-0.625862562722587	0.0780710753695497\\
		-0.545454545454545	0.0830965754144776\\
		-0.447954129729544	0.0891903501430983\\
		-0.363636363636364	0.0944602094393045\\
		-0.267949040379642	0.10044066591198\\
		-0.181818181818182	0.105823843464131\\
		-0.0917523442360945	0.111452957154454\\
		0	0.117187477488958\\
		0.0909329275051746	0.12287078428832\\
		0.181818181818182	0.128551111513785\\
		0.276340803623378	0.134545502657012\\
		0.363636363636364	0.141505768797686\\
		0.46183178576623	0.150097919000078\\
		0.545454545454545	0.157414953704917\\
		0.634803622956617	0.165233044178906\\
		0.727272727272727	0.173324138612147\\
		0.81877868370371	0.181908089106931\\
		0.909090909090909	0.192396158006623\\
		1.00007025233224	0.205082750663181\\
		1.09090909090909	0.219669797224893\\
		1.18844925769827	0.237706384939799\\
		1.27272727272727	0.255205316933984\\
		1.35934851571652	0.275043520188001\\
		1.45454545454545	0.299008002598171\\
		1.54994123691727	0.325295420873598\\
		1.63636363636364	0.350500024255046\\
		1.70498279509592	0.373595665740981\\
		1.81818181818182	0.41140499518274\\
		1.90740209505949	0.444032720470319\\
		2	0.479999524168838\\
	};
	\draw [gray, thin, densely dashed] (-1.75,0.01) -- (-1.75,-0.05);
	\draw [gray, thin, densely dashed] (0.25,0.13) -- (0.25,-0.05);
	
	\draw [gray, thin, densely dashed] (0.35,0.14) -- (0.35,-0.05);
	\draw [gray, thin, densely dashed] (0.75,0.17) -- (0.75,-0.05);
	
	\draw [gray, thin, densely dashed] (0.8,0) -- (0.8,-0.15);

	\node (C)[red] at (1,0.4) {$k\mapsto (P_\nu-P_\mu)^c(k)$};
	\node (P)[blue] at ( -1.3,0.2) {$k\mapsto (P_{\nu}-P_\mu)(k)$};
	
	\draw [gray,->] (1.4,0.06) to[out=230, in=330] (1,0.05);
	\node (P0)[black] at (1.55,0.1) {slope $\nu(\R)-\mu(\R)$};
	\node (bar)[black] at ( 0.8,-0.2)  {$\frac{\overline\nu-\overline\mu}{\nu(\R)-\mu(\R)}$};
	
	\node (k1)[black] at (-1.75,-0.1) {$l_1$};
	\node (k2)[black] at (0.25,-0.1) {$r_1$};
	\node (k3)[black] at (0.35,-0.1) {$l_2$};
	\node (k4)[black] at (0.75,-0.1) {$r_2$};
	
	\end{axis}
	\end{tikzpicture}
	\begin{tikzpicture}[scale=0.8]
	
	\begin{axis}[%
	width=5.028in,
	height=2.754in,
	at={(1.011in,0.642in)},
	scale only axis,
	xmin=-2,
	xmax=2,
	ymin=-0.5,
	ymax=1,
	axis line style={draw=none},
	ticks=none
	]
	\addplot [color=black, line width=1.0pt, forget plot]
	table[row sep=crcr]{%
		-2	-0\\
		-1.91657902485342	-0\\
		-1.81818181818182	-0\\
		-1.72300276879786	-0\\
		-1.63636363636364	-0\\
		-1.54716681534369	-0\\
		-1.45454545454545	-0\\
		-1.35423690138381	-0\\
		-1.27272727272727	-0\\
		-1.17140721081572	-0\\
		-1.09090909090909	-0\\
		-1.00320655232955	-0\\
		-0.909090909090909	-0\\
		-0.821698864119408	-0\\
		-0.727272727272727	-0\\
		-0.625862562722587	-0\\
		-0.545454545454545	-0\\
		-0.447954129729544	-0\\
		-0.363636363636364	-0\\
		-0.267949040379642	-0\\
		-0.181818181818182	-0\\
		-0.0917523442360945	-0\\
		-0.0458761721180472	-0\\
		-0.0229380860590236	-0\\
		-0.0114690430295118	-0\\
		-0.0057345215147559	-0\\
		-0.00286726075737795	-0\\
		-0.00143363037868898	-0\\
		-0.000716815189344488	-0\\
		-0.000358407594672244	-0\\
		-0.000179203797336122	-0\\
		0	0\\
		0.000177603374033544	3.55206748067088e-05\\
		0.000355206748067088	7.10413496134176e-05\\
		0.000710413496134176	0.000142082699226835\\
		0.00142082699226835	0.000284165398453671\\
		0.00284165398453671	0.000568330796907341\\
		0.00568330796907341	0.00113666159381468\\
		0.0113666159381468	0.00227332318762936\\
		0.0227332318762936	0.00454664637525873\\
		0.0454664637525873	0.00909329275051746\\
		0.0909329275051746	0.0181865855010349\\
		0.181818181818182	0.0363636363636363\\
		0.276340803623378	0.0552681607246757\\
		0.363636363636364	0.0727272727272728\\
		0.46183178576623	0.0923663571532461\\
		0.545454545454545	0.109090909090909\\
		0.634803622956617	0.126960724591323\\
		0.727272727272727	0.145454545454545\\
		0.81877868370371	0.163755736740742\\
		0.909090909090909	0.181818181818182\\
		1.00007025233224	0.200014050466447\\
		1.09090909090909	0.218181818181818\\
		1.18844925769827	0.237689851539653\\
		1.27272727272727	0.254545454545455\\
		1.35934851571652	0.271869703143304\\
		1.45454545454545	0.290909090909091\\
		1.54994123691727	0.309988247383454\\
		1.63636363636364	0.327272727272727\\
		1.70498279509592	0.340996559019184\\
		1.81818181818182	0.363636363636364\\
		1.90740209505949	0.381480419011898\\
		2	0.4\\
	};
	\addplot [color=blue, dotted, line width=1.0pt, forget plot]
	table[row sep=crcr]{%
		-2	0\\
		-1.91657902485342	0.000869882386800831\\
		-1.81818181818182	0.00413223140495868\\
		-1.72300276879786	0.00959093326170628\\
		-1.63636363636364	0.0165289256198347\\
		-1.54716681534369	0.0256322366407475\\
		-1.45454545454545	0.0371900826446281\\
		-1.35423690138381	0.0521262474417983\\
		-1.27272727272727	0.0661157024793388\\
		-1.17140721081572	0.0858207512860222\\
		-1.09090909090909	0.103305785123967\\
		-1.00320655232955	0.124199647164843\\
		-0.909090909090909	0.148760330578512\\
		-0.821698864119408	0.170483664480881\\
		-0.774485795696068	0.179858728517657\\
		-0.727272727272727	0.187561982454777\\
		-0.676567644997657	0.193973140089941\\
		-0.625862562722587	0.198456054625072\\
		-0.545454545454545	0.201611568195798\\
		-0.447954129729544	0.200287384861301\\
		-0.363636363636364	0.200165366844466\\
		-0.267949040379642	0.202179589751035\\
		-0.181818181818182	0.205950142018082\\
		-0.0917523442360945	0.211877121893853\\
		0	0.219999872829605\\
		0.0909329275051746	0.230127031373444\\
		0.181818181818182	0.242314152566714\\
		0.276340803623378	0.257179558256049\\
		0.363636363636364	0.272892579123249\\
		0.46183178576623	0.292844369134017\\
		0.545454545454545	0.310702333919897\\
		0.634803622956617	0.324766246801399\\
		0.727272727272727	0.333016590517152\\
		0.81877868370371	0.334867788279656\\
		0.909090909090909	0.330578337483929\\
		1.00007025233224	0.324996348816508\\
		1.09090909090909	0.321487238867364\\
		1.18844925769827	0.320017007662797\\
		1.27272727272727	0.320660890763356\\
		1.35934851571652	0.323173747926235\\
		1.45454545454545	0.328099171797474\\
		1.54994123691727	0.335307124671952\\
		1.63636363636364	0.343227488514124\\
		1.70498279509592	0.351875904071431\\
		1.81818181818182	0.367768996580196\\
		1.90740209505949	0.382551389680759\\
		2	0.399999411241802\\
	};
	\addplot [color=red, line width=1.0pt, forget plot]
	table[row sep=crcr]{%
		-2	0\\
		-1.91657902485342	0.000869882386800831\\
		-1.81818181818182	0.00413223140495868\\
		-1.72300276879786	0.00959093326170628\\
		-1.63636363636364	0.0165289256198347\\
		-1.54716681534369	0.0252833248897676\\
		-1.45454545454545	0.0345454722316835\\
		-1.35423690138381	0.0445763397446485\\
		-1.27272727272727	0.0527273125212879\\
		-1.17140721081572	0.0628593310322352\\
		-1.09090909090909	0.0709091528108923\\
		-1.00320655232955	0.0796794173328458\\
		-0.909090909090909	0.0890909931004967\\
		-0.821698864119408	0.0978302082238923\\
		-0.727272727272727	0.107272833390101\\
		-0.625862562722587	0.117413862175863\\
		-0.545454545454545	0.125454673679705\\
		-0.447954129729544	0.135204727107557\\
		-0.363636363636364	0.14363651396931\\
		-0.267949040379642	0.153205257929874\\
		-0.181818181818182	0.161818354258914\\
		-0.0917523442360945	0.170824948968483\\
		0	0.180000194548519\\
		0.0909329275051746	0.189093498355828\\
		0.181818181818182	0.198182034838123\\
		0.276340803623378	0.207634308511915\\
		0.363636363636364	0.216363875127727\\
		0.46183178576623	0.226183429280573\\
		0.545454545454545	0.234545715417332\\
		0.634803622956617	0.243480634031746\\
		0.727272727272727	0.252727555706936\\
		0.81877868370371	0.261878162476502\\
		0.909090909090909	0.270909395996541\\
		1.00007025233224	0.280007341383109\\
		1.09090909090909	0.289091236286145\\
		1.18844925769827	0.298845264825247\\
		1.27272727272727	0.307273076575749\\
		1.35934851571652	0.315935211407196\\
		1.45454545454545	0.325454916865354\\
		1.54994123691727	0.334994506701978\\
		1.63636363636364	0.343227488514124\\
		1.70498279509592	0.351875904071431\\
		1.81818181818182	0.367768996580196\\
		1.90740209505949	0.382551389680759\\
		2	0.399999411241802\\
	};

	\draw [gray, thin, densely dashed] (-1.6,0.02) -- (-1.6,-0.1);
	\draw [gray, thin, densely dashed] (1.6,0.33) -- (1.6,-0.1);
	
	\draw [gray, thin, densely dashed] (0,0) -- (0,-0.1);
	
	\node (C)[red] at (1.4,0.5) {$k\mapsto (P_\nu-P_\mu)^c(k)$};
	\node (P)[blue] at ( -1,0.33) {$k\mapsto (P_{\nu}-P_\mu)(k)$};
	
	\draw [gray,->] (0.8,-0.01) to[out=230, in=350] (0.4,0.025);
	\node (P0)[black] at (1.1,0.05) {slope $\nu(\R)-\mu(\R)$};
	\node (bar)[black] at ( 0,-0.2)  {$\frac{\overline\nu-\overline\mu}{\nu(\R)-\mu(\R)}$};
	
	\node (k1)[black] at (-1.6,-0.2) {$l$};
	\node (k2)[black] at (1.6,-0.2) {$r$};

	\end{axis}
	\end{tikzpicture}
	\begin{tikzpicture}[scale=0.8]
	
	\begin{axis}[%
	width=5.028in,
	height=2.754in,
	at={(1.011in,0.642in)},
	scale only axis,
	xmin=-2,
	xmax=2,
	ymin=-0.3,
	ymax=1,
	axis line style={draw=none},
	ticks=none
	]
	\addplot [color=black, line width=1.0pt, forget plot]
	table[row sep=crcr]{%
		-2	-0\\
		-1.91657902485342	-0\\
		-1.81818181818182	-0\\
		-1.72300276879786	-0\\
		-1.63636363636364	-0\\
		-1.54716681534369	-0\\
		-1.45454545454545	-0\\
		-1.35423690138381	-0\\
		-1.27272727272727	-0\\
		-1.17140721081572	-0\\
		-1.09090909090909	-0\\
		-1.00320655232955	-0\\
		-0.909090909090909	-0\\
		-0.821698864119408	-0\\
		-0.727272727272727	-0\\
		-0.625862562722587	-0\\
		-0.545454545454545	-0\\
		-0.447954129729544	-0\\
		-0.363636363636364	-0\\
		-0.267949040379642	-0\\
		-0.181818181818182	-0\\
		-0.0917523442360945	-0\\
		-0.0458761721180472	-0\\
		-0.0229380860590236	-0\\
		-0.0114690430295118	-0\\
		-0.0057345215147559	-0\\
		-0.00286726075737795	-0\\
		-0.00143363037868898	-0\\
		-0.000716815189344488	-0\\
		-0.000358407594672244	-0\\
		-0.000179203797336122	-0\\
		0	0\\
		0.000177603374033544	8.8801687016772e-05\\
		0.000355206748067088	0.000177603374033544\\
		0.000710413496134176	0.000355206748067088\\
		0.00142082699226835	0.000710413496134176\\
		0.00284165398453671	0.00142082699226835\\
		0.00568330796907341	0.00284165398453671\\
		0.0113666159381468	0.00568330796907341\\
		0.0227332318762936	0.0113666159381468\\
		0.0454664637525873	0.0227332318762936\\
		0.0909329275051746	0.0454664637525873\\
		0.181818181818182	0.0909090909090908\\
		0.276340803623378	0.138170401811689\\
		0.363636363636364	0.181818181818182\\
		0.46183178576623	0.230915892883115\\
		0.545454545454545	0.272727272727273\\
		0.634803622956617	0.317401811478308\\
		0.727272727272727	0.363636363636364\\
		0.81877868370371	0.409389341851855\\
		0.909090909090909	0.454545454545455\\
		1.00007025233224	0.500035126166118\\
		1.09090909090909	0.545454545454545\\
		1.18844925769827	0.594224628849133\\
		1.27272727272727	0.636363636363636\\
		1.35934851571652	0.679674257858261\\
		1.45454545454545	0.727272727272727\\
		1.54994123691727	0.774970618458634\\
		1.63636363636364	0.818181818181818\\
		1.70498279509592	0.852491397547959\\
		1.81818181818182	0.909090909090909\\
		1.90740209505949	0.953701047529744\\
		2	1\\
	};
	\addplot [color=blue, dotted, line width=1.0pt, forget plot]
	table[row sep=crcr]{%
		-2	0\\
		-1.91657902485342	0.000869882386800831\\
		-1.81818181818182	0.00413223140495868\\
		-1.72300276879786	0.00959093326170628\\
		-1.63636363636364	0.0165289256198347\\
		-1.54716681534369	0.0256322366407475\\
		-1.45454545454545	0.0371900826446281\\
		-1.35423690138381	0.0521262474417983\\
		-1.27272727272727	0.0661157024793388\\
		-1.17140721081572	0.0858207512860222\\
		-1.09090909090909	0.103305785123967\\
		-1.00320655232955	0.124199647164843\\
		-0.909090909090909	0.146694214519472\\
		-0.821698864119408	0.165601372672562\\
		-0.727272727272727	0.183884292050048\\
		-0.625862562722587	0.201037000003866\\
		-0.545454545454545	0.212809890429822\\
		-0.447954129729544	0.225594272940513\\
		-0.363636363636364	0.238105179273283\\
		-0.267949040379642	0.254487380583385\\
		-0.181818181818182	0.271008814771129\\
		-0.0917523442360945	0.290614292362813\\
		0	0.312500164297412\\
		0.0909329275051746	0.336266918513413\\
		0.181818181818182	0.362086872678147\\
		0.276340803623378	0.391130717218551\\
		0.363636363636364	0.419938079532311\\
		0.46183178576623	0.454619181502125\\
		0.545454545454545	0.485537073999983\\
		0.634803622956617	0.517031062272142\\
		0.727272727272727	0.547520764209887\\
		0.81877868370371	0.575582601804173\\
		0.909090909090909	0.601239402044244\\
		1.00007025233224	0.625017727202022\\
		1.09090909090909	0.648760514969866\\
		1.18844925769827	0.676551023994088\\
		1.27272727272727	0.702478734481946\\
		1.35934851571652	0.730978647536514\\
		1.45454545454545	0.764462685482679\\
		1.54994123691727	0.800289624489699\\
		1.63636363636364	0.834438239764801\\
		1.70498279509592	0.863616113618478\\
		1.81818181818182	0.913391828629002\\
		1.90740209505949	0.954772900225842\\
		2	0.99999973064989\\
	};
	\addplot [color=red, line width=1.0pt, forget plot]
	table[row sep=crcr]{%
		-2	0\\
		-1.91657902485342	0.000869882386800831\\
		-1.81818181818182	0.00413223140495868\\
		-1.72300276879786	0.00959093326170628\\
		-1.63636363636364	0.0165289256198347\\
		-1.54716681534369	0.0256322366407475\\
		-1.45454545454545	0.0371900826446281\\
		-1.35423690138381	0.0521262474417983\\
		-1.27272727272727	0.0661157024793388\\
		-1.17140721081572	0.0850486503118394\\
		-1.09090909090909	0.100142050190845\\
		-1.00320655232955	0.116586278785503\\
		-0.909090909090909	0.134232964694667\\
		-0.821698864119408	0.150618975728574\\
		-0.727272727272727	0.168323879198489\\
		-0.625862562722587	0.187338288070725\\
		-0.545454545454545	0.202414793702312\\
		-0.447954129729544	0.220696124553436\\
		-0.363636363636364	0.236505708206134\\
		-0.267949040379642	0.254447084165479\\
		-0.181818181818182	0.271008814771129\\
		-0.0917523442360945	0.290614292362813\\
		0	0.312500164297412\\
		0.0909329275051746	0.336266918513413\\
		0.181818181818182	0.362086872678147\\
		0.276340803623378	0.391043991837767\\
		0.363636363636364	0.418323866169495\\
		0.46183178576623	0.449009948889569\\
		0.545454545454545	0.47514207262221\\
		0.634803622956617	0.503063671447508\\
		0.727272727272727	0.531960279074926\\
		0.81877868370371	0.560555902857744\\
		0.909090909090909	0.588778485527641\\
		1.00007025233224	0.617209542617342\\
		1.09090909090909	0.645596691980357\\
		1.18844925769827	0.676078007317685\\
		1.27272727272727	0.702478734481946\\
		1.35934851571652	0.730978647536514\\
		1.45454545454545	0.764462685482679\\
		1.54994123691727	0.800289624489699\\
		1.63636363636364	0.834438239764801\\
		1.70498279509592	0.863616113618478\\
		1.81818181818182	0.913391828629002\\
		1.90740209505949	0.954772900225842\\
		2	0.99999973064989\\
	};
	\draw [gray, thin, densely dashed] (-1.25,0.07) -- (-1.25,-0.05);
	\draw [gray, thin, densely dashed] (-0.25,0.26) -- (-0.25,-0.05);
	
	\draw [gray, thin, densely dashed] (0.25,0.38) -- (0.25,-0.05);
	\draw [gray, thin, densely dashed] (1.25,0.69) -- (1.25,0.25);
	
	\draw [gray, thin, densely dashed] (1.25,0.07) -- (1.25,-0.05);

	\draw [gray, thin, densely dashed] (0,0) -- (0,-0.15);

	\node (C)[red] at (0.9,0.8) {$k\mapsto (P_\nu-P_\mu)^c(k)$};
	\node (P)[blue] at ( -1.3,0.3) {$k\mapsto (P_{\nu}-P_\mu)(k)$};
	
	\draw [gray,->] (1,0.1) to[out=230, in=330] (0.4,0.15);
	\node (P0)[black] at (1.1,0.15) {slope $\nu(\R)-\mu(\R)$};
	\node (bar)[black] at ( 0,-0.2)  {$\frac{\overline\nu-\overline\mu}{\nu(\R)-\mu(\R)}$};
	
	\node (k1)[black] at (-1.25,-0.1) {$l_1$};
	\node (k2)[black] at (-0.25,-0.1) {$r_1$};
	\node (k3)[black] at (0.25,-0.1) {$l_2$};
	\node (k4)[black] at (1.25,-0.1) {$r_2$};
	
	\end{axis}
	\end{tikzpicture}
	\caption{In each drawing, the dotted curve corresponds to $P_\nu-P_\mu$, while the solid curve below it corresponds to $P_\nu-P_{S^\nu(\mu)}=(P_\nu-P_\mu)^c$.
		Top and bottom figures: $(P_\nu-P_\mu)^c$ is linear (and lies strictly below $P_\nu-P_\mu$) on $(l_1,r_1)\cup(l_2,r_2)$ and $(P_\nu-P_\mu)^c=P_\nu-P_\mu$ on $\R\setminus((l_1,r_1)\cup(l_2,r_2))$.
		Middle figure: $(P_\nu-P_\mu)^c$ is linear (and lies strictly below $P_\nu-P_\mu$) on $(l,r)$ and $(P_\nu-P_\mu)^c=P_\nu-P_\mu$ on $\R\setminus(l,r)$.}
	\label{fig:LC}
\end{figure}
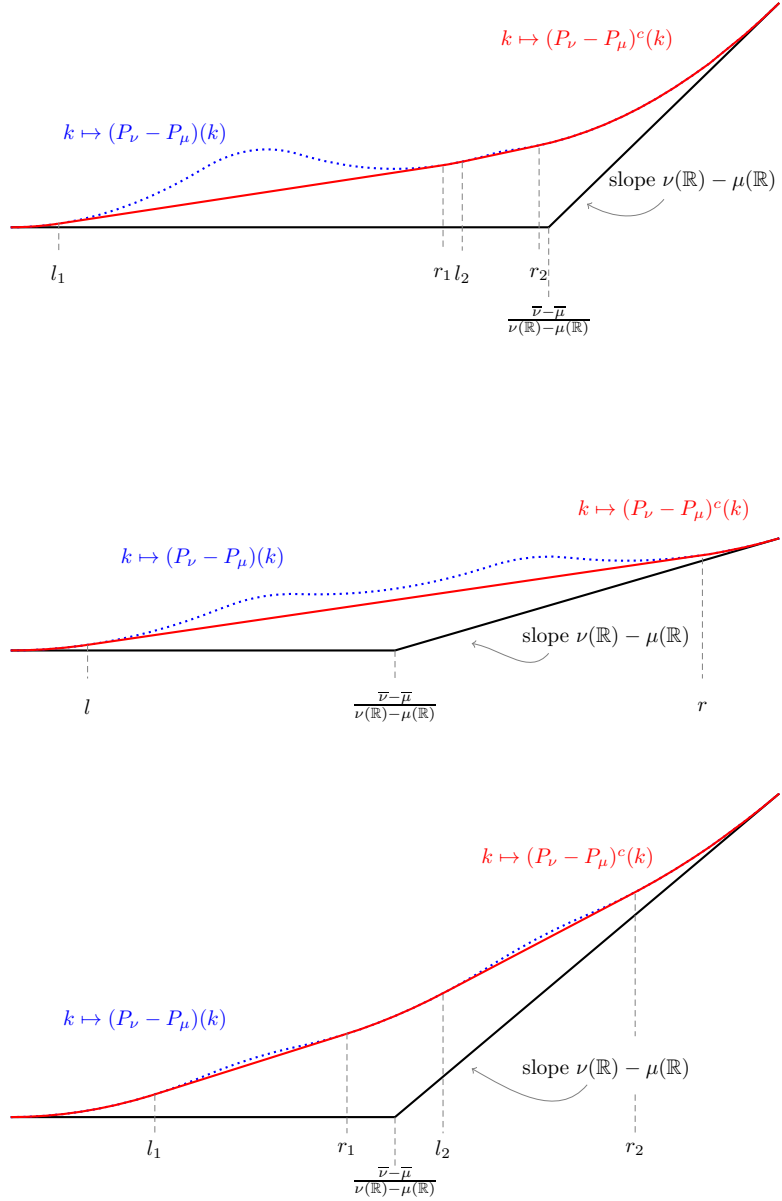
\begin{eg}\label{eg:shadow}
	Let $\hat\mu,\nu\in\sM$ (with $\hat\mu\leq_{cx}\nu$) be given by
	$$
	\hat\mu=\frac{U[-1,-0.5]+U[0.5,1]}{2}\quad\textrm{and}\quad \nu=U[-2,2],
	$$
	and let $\mu\in\sM$ with $\mu\leq\hat\mu$ (so that $\mu\leq_E\nu$). For three different choices of $\mu$ (that are related to the parametrisations of $\hat\mu$ corresponding to the left-curtain ($\pi^{lc}$), middle-curtain ($\pi^{mc}$) and sunset ($\pi^{sun}$) couplings, respectively) we find the shadow $S^\nu(\mu)$.
	\begin{enumerate}
		\item (Left-curtain) Set $\mu=\hat\mu\lvert_{[-1,0.6]}$. Then $\nu(\R)-\mu(\R)=0.4$ and $\overline\nu-\overline\mu=0.32$. In particular, $S^\nu(\mu)=\nu\lvert_{[l_1,r_1]\cup[l_2,r_2]}$, where $l_1=-1.75$, $r_1=0.25$, $l_2=0.35$ and $r_2=0.75$. See the top drawing in \cref{fig:LC}.
		\item (Middle-curtain) Set $u=4/5$ and $\mu=S^\mu(u\delta_{\overline{\hat\mu}})$. Then $\mu=\hat\mu\lvert_{[-0.9,0.9]}$,  $\nu(\R)-\mu(\R)=0.2$ and $\overline\nu-\overline\mu=0$. In particular, $S^\nu(\mu)=\nu\lvert_{[l,r]}$, where $-l=r=1.6$. See the middle drawing in \cref{fig:LC}.
		\item (Sunset) Fix $u\in(0,1)$ and set $\mu=u\hat\mu$. If $u\leq0.25$, then $\mu\leq\nu$ and therefore $S^\nu(\mu)=\mu$. Consider $u=0.5$. Then $\nu(\R)-\mu(\R)=0.5$ and $\overline\nu-\overline\mu=0$. In particular, $S^\nu(\mu)=\nu\lvert_{[l_1,r_1]\cup[l_2,r_2]}$, where $l_1=-1.25$, $r_1=-0.25$, $l_2=0.25$ and $r_2=1.25$. See the bottom drawing in \cref{fig:LC}.
	\end{enumerate}
\end{eg}

We now turn to the associativity of the shadow measure. As alluded to in the introduction, it is one of the most important results on the structure of shadows. The proof of the associativity (\cref{thm:shadow_assoc}) given in Beiglb\"{o}ck and Juillet \cite{BeiglbockJuillet:16} is delicate and based on the approximation of $\mu$ by atomic measures. Thanks to \cref{thm:shadow_potential}, we are able to provide a simple proof of \cref{thm:shadow_assoc}.
\begin{thm}[Beiglb\"{o}ck and Juillet \cite{BeiglbockJuillet:16}, Theorem 4.8]
	\label{thm:shadow_assoc}
	Suppose $\mu=\mu_1+\mu_2$ for some $\mu_1,\mu_2\in\sM$ and $\mu \leq_E \nu$. Then $\mu_2\leq_E\nu-S^\nu(\mu_1)$ and
	\begin{equation}\label{eq:assoc}
	S^\nu(\mu_1+\mu_2)=S^\nu(\mu_1)+S^{\nu-S^\nu(\mu_1)}(\mu_2).
	\end{equation}
\end{thm}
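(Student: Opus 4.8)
The plan is to carry everything over to the level of (modified) potential functions using the explicit formula of \cref{thm:shadow_potential}, and then to isolate the true content of the theorem as the convex-hull identity provided by \cref{lem:equal_hulls}. Throughout I would write $\chi_1 := \nu - S^\nu(\mu_1)$. Since $\mu_1 \leq \mu \leq_E \nu$ gives $\mu_1 \leq_E \nu$, the shadow $S^\nu(\mu_1)$ is defined, and property~2 of \cref{defn:shadow} gives $S^\nu(\mu_1) \leq \nu$, so $\chi_1 \in \sM$ with $\chi_1 \geq 0$. By linearity of $k \mapsto (k-x)^+$ and \cref{thm:shadow_potential} applied to $(\mu_1,\nu)$,
\[ P_{\chi_1} = P_\nu - P_{S^\nu(\mu_1)} = P_\nu - \big(P_\nu - (P_\nu - P_{\mu_1})^c\big) = (P_\nu - P_{\mu_1})^c. \]
I would also record the bookkeeping: because $\mu_1 \leq_{cx} S^\nu(\mu_1)$ these share mass and barycentre, so $\chi_1(\R) - \mu_2(\R) = \nu(\R) - \mu(\R)$ and $\overline{\chi_1} - \overline{\mu_2} = \overline{\nu} - \overline{\mu}$.

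The decisive step is the hull computation. Using $P_\mu = P_{\mu_1} + P_{\mu_2}$, the formula for $P_{\chi_1}$, and \cref{lem:equal_hulls} with $f = P_\nu - P_{\mu_1}$ and the convex function $g = P_{\mu_2}$,
\[ (P_{\chi_1} - P_{\mu_2})^c = \big((P_\nu - P_{\mu_1})^c - P_{\mu_2}\big)^c = \big((P_\nu - P_{\mu_1}) - P_{\mu_2}\big)^c = (P_\nu - P_\mu)^c. \]
By \cref{cor:CSpotential} for $\mu \leq_E \nu$ the right-hand side lies in $\sD(\nu(\R) - \mu(\R), \overline{\nu} - \overline{\mu})$, which by the bookkeeping above equals $\sD(\chi_1(\R) - \mu_2(\R), \overline{\chi_1} - \overline{\mu_2})$.

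Next I would prove the extended convex order $\mu_2 \leq_E \chi_1$, which is what gives meaning to $S^{\chi_1}(\mu_2)$. I cannot simply invoke \cref{thm:shadow_potential} for $(\mu_2,\chi_1)$, since its hypothesis is the relation I am trying to establish; instead I produce a witness for \cref{lem:order}(ii). Let $\eta \in \sM$ have modified potential $P_\eta := P_{\chi_1} - (P_{\chi_1} - P_{\mu_2})^c$, and repeat the three verifications from the proof of \cref{thm:shadow_potential}: \cref{lem:diff} with $g = P_{\chi_1}$, $f = P_{\mu_2}$ shows $P_\eta$ is convex; from $(P_{\chi_1} - P_{\mu_2})^c \leq P_{\chi_1} - P_{\mu_2}$ one gets $P_{\mu_2} \leq P_\eta$; and \cref{CSpotential} with $g = P_{\chi_1}$, $f = (P_{\chi_1} - P_{\mu_2})^c$, using the $\sD$-membership just established, gives $P_\eta \in \sD(\mu_2(\R), \overline{\mu_2})$. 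Together these say $\mu_2 \leq_{cx} \eta$, while convexity of $P_{\chi_1} - P_\eta = (P_{\chi_1} - P_{\mu_2})^c$ says $\eta \leq \chi_1$. Hence $\mu_2 \leq_{cx} \eta \leq \chi_1$, so $\mu_2 \leq_E \chi_1$ by \cref{lem:order}, and by uniqueness of the shadow $\eta = S^{\chi_1}(\mu_2)$ with $P_{S^{\chi_1}(\mu_2)} = (P_\nu - P_{\mu_1})^c - (P_\nu - P_\mu)^c$.

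The associativity identity then follows immediately at the level of potentials. Adding the two formulas,
\[ P_{S^\nu(\mu_1)} + P_{S^{\chi_1}(\mu_2)} = \big(P_\nu - (P_\nu - P_{\mu_1})^c\big) + \big((P_\nu - P_{\mu_1})^c - (P_\nu - P_\mu)^c\big) = P_\nu - (P_\nu - P_\mu)^c = P_{S^\nu(\mu)}, \]
the last equality being \cref{thm:shadow_potential} for $(\mu,\nu)$. Since a measure is determined by the second derivative of its modified potential and $P_A + P_B = P_{A+B}$, this yields $S^\nu(\mu_1) + S^{\chi_1}(\mu_2) = S^\nu(\mu)$, which is \eqref{eq:assoc}. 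I expect the main obstacle to be the third paragraph, namely obtaining $\mu_2 \leq_E \chi_1$ without circularity; the clean identity $(P_{\chi_1} - P_{\mu_2})^c = (P_\nu - P_\mu)^c$ coming from \cref{lem:equal_hulls} is what makes this, and the final step, essentially bookkeeping. The boundary case $\mu \leq_{cx} \nu$ (equal masses) is consistent with the above and can be noted separately via \cref{rem:Ecx}, where the shadows collapse onto the target measures.
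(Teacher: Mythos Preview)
Your proposal is correct and follows essentially the same route as the paper: both arguments work entirely at the level of modified potentials, define the same candidate $P_\theta = P_{\chi_1} - (P_{\chi_1}-P_{\mu_2})^c$ (your $P_\eta$), verify via \cref{lem:diff} and \cref{CSpotential} that it lies in $\sC(\mu_2,\chi_1)$ to obtain $\mu_2 \leq_E \chi_1$, and then derive \eqref{eq:assoc} from the hull identity of \cref{lem:equal_hulls}. Your explicit remark that one cannot invoke \cref{thm:shadow_potential} for $(\mu_2,\chi_1)$ before establishing $\mu_2 \leq_E \chi_1$, and your identification $\eta = S^{\chi_1}(\mu_2)$ a posteriori, make the logical order slightly clearer than in the paper, but the content is the same.
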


\begin{proof}
	
	
	

	We first prove that  $\mu_2\leq_E\nu-S^\nu(\mu_1)$. Define $P_\theta:\R\to\R_+$ by
	$$
	P_\theta(k)=(P_\nu-P_{\mu_1})^c(k)-((P_\nu-P_{\mu_1})^c-P_{\mu_2})^c(k),\quad k\in\R.
	$$
	We will show that $P_\theta\in\sC(\mu_2,\nu-S^\nu(\mu_1))$. Then the second derivative of $P_\theta$ corresponds to a measure $\theta\in\sM^{\nu-S^\nu(\mu_1)}_{\mu_2}$, which by Lemma~\ref{lem:order} is enough to prove the assertion.
	
	Convexity of $P_\theta$ is a direct consequence of \cref{lem:diff} with $g=(P_\nu-P_{\mu_1})^c$ and $f=P_{\mu_2}$. Moreover, since
	$$
	P_{\nu-S^\nu(\mu_1)}=P_\nu-P_{S^\nu(\mu_1)}=(P_\nu-P_{\mu_1})^c,
	$$
	we have that
	$$
	P_{\nu-S^{\nu}(\mu_1)}-P_\theta =((P_\nu-P_{\mu_1})^c-P_{\mu_2})^c\leq(P_\nu-P_{\mu_1})^c-P_{\mu_2},
	$$
	and it follows that $(P_{\nu-S^{\nu}(\mu_1)}-P_\theta)$ is convex and $P_{\mu_2}\leq P_\theta$. To prove that $\mu_2 \leq_{E} \nu - S^{\nu}(\mu_1)$ it only remains to show that $P_\theta$ has the correct limiting behaviour to ensure that $P_\theta \in \sD(\mu_2(\R),\overline{\mu_2})$. For this we will apply \cref{CSpotential} (together with \cref{corE}) to each of the convex hulls in the definition of $P_\theta$ and then to $P_\theta$ itself.
	
	First, since $\mu_1\leq_E\nu$, by \cref{cor:CSpotential}
	we have that $(P_\nu-P_{\mu_1})^c\in\sD(\nu(\R)-\mu_1(\R),\overline\nu-\overline{\mu_1})$. Similarly, since $\mu_1+\mu_2\leq_E\nu$, $(P_\nu-P_{\mu_1}-P_{\mu_2})^c\in\sD(\nu(\R)-\mu_1(\R)-\mu_2(\R),\overline{\nu-\mu_1-\mu_2})$. But, by \cref{lem:equal_hulls}, with $f=(P_\nu-P_{\mu_1})$ and $ g=P_{\mu_2}$, we have that $((P_\nu-P_{\mu_1})^c-P_{\mu_2})^c=(P_\nu-P_{\mu_1}-P_{\mu_2})^c$. Finally, recall that $P_\theta\geq P_{\mu_2}\geq P_{\mu_2(\R)\delta_{\overline{\mu_2}/ \mu_2(\R)}}$ and, since $P_\theta$ is convex, $P_\theta=P_\theta^c$. Therefore, by applying \cref{CSpotential} with $g=(P_\nu-P_{\mu_1})^c$ and $f=((P_\nu-P_{\mu_1})^c-P_{\mu_2})^c$, we conclude that $P_\theta\in\sD(\mu_2(\R),\overline{\mu_2})$.

	We are left to prove the associativity property \eqref{eq:assoc}. By applying \cref{thm:shadow_potential} to $S^\nu(\mu_1+\mu_2)$ 
	we have by \cref{lem:equal_hulls} that
	$$
	P_{S^\nu(\mu_1+\mu_2)}= P_\nu - \left((P_\nu-P_{\mu_1})-P_{\mu_2}\right)^c= P_\nu - \left((P_\nu-P_{\mu_1})^c-P_{\mu_2}\right)^c,
	$$
	whilst applying \cref{thm:shadow_potential} to $S^\nu(\mu_1)$ and $S^{\nu-S^\nu(\mu_1)}(\mu_2)$ gives
	\begin{align*}
	P_{S^\nu(\mu_1)}+P_{S^{\nu-S^\nu(\mu_1)}(\mu_2)}&=\left\{P_\nu-(P_\nu-P_{\mu_1})^c\right\}+\left\{P_{\nu-S^\nu(\mu_1)}-(P_{\nu-S^\nu(\mu_1)}-P_{\mu_2})^c\right\}\\
	&=P_\nu-\left((P_\nu-P_{\mu_1})^c-P_{\mu_2}\right)^c,
	\end{align*}
	where we again used that $P_{\nu-S^\nu(\mu_1)}=P_\nu - P_{S^\nu(\mu_1)}=(P_\nu-P_{\mu_1})^c$.
\end{proof}

We give one further result which is easy to prove using \cref{thm:shadow_potential} and which describes a
structural property of the shadow.
\begin{prop}\label{prop:shadowSum}
	Suppose $\xi,\mu,\nu\in\sM$ with $\xi \leq \mu \leq_E \nu$. Then, $\xi \leq_E \nu$, $\xi \leq_E S^\nu(\mu)$ and
	$$
	S^{S^\nu(\mu)}(\xi) = S^\nu(\xi).
	$$
\end{prop}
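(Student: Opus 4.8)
The plan is to derive the two extended-order claims from transitivity of $\leq_E$, and then to prove the identity $S^{S^\nu(\mu)}(\xi)=S^\nu(\xi)$ by a two-sided comparison in convex order based on the minimality clause (item~\ref{s3}) of \cref{defn:shadow}. The single substantive ingredient will be the monotonicity of the shadow in its first argument, namely that $\xi\leq\mu$ forces $S^\nu(\xi)\leq S^\nu(\mu)$ as measures; this is precisely what is needed to make each of the two shadows an admissible competitor for the other.

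First I would dispose of the order relations. Since $\xi\leq\mu$ we have $\xi\leq_E\mu$ (domination implies extended convex order), and $\leq_E$ is transitive because it is defined by the family of inequalities $\int f\,d\eta\leq\int f\,d\chi$ ranging over non-negative convex $f$. Hence $\xi\leq_E\mu\leq_E\nu$ gives $\xi\leq_E\nu$. Moreover $\mu\leq_{cx}S^\nu(\mu)$ (item~\ref{s1} of \cref{defn:shadow}) yields $\mu\leq_E S^\nu(\mu)$, so that $\xi\leq_E\mu\leq_E S^\nu(\mu)$ gives $\xi\leq_E S^\nu(\mu)$. In particular both $S^\nu(\xi)$ and $S^{S^\nu(\mu)}(\xi)$ are well defined.

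Next I would establish the monotonicity $S^\nu(\xi)\leq S^\nu(\mu)$. Writing $\rho:=\mu-\xi$, the hypothesis $\xi\leq\mu$ guarantees $\rho\in\sM$, and $\mu=\xi+\rho$ with $\mu\leq_E\nu$. Applying the associativity result \cref{thm:shadow_assoc} with $\mu_1=\xi$ and $\mu_2=\rho$ gives
\[
S^\nu(\mu)=S^\nu(\xi)+S^{\nu-S^\nu(\xi)}(\rho).
\]
Since the second summand is a non-negative measure in $\sM$, we obtain $S^\nu(\mu)-S^\nu(\xi)\geq0$, that is $S^\nu(\xi)\leq S^\nu(\mu)$. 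This is the step I expect to be the crux: a direct attack through the potential formula \cref{thm:shadow_potential} would require showing that $(P_\nu-P_\xi)^c-(P_\nu-P_\mu)^c$ is convex, which reduces to an identity of the shape $(\Phi-G)^c=\Phi^c-G$ that is false for general convex $G$; routing the monotonicity through associativity sidesteps this difficulty entirely.

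Finally I would close the argument by antisymmetry of $\leq_{cx}$. On one hand, $\tau:=S^{S^\nu(\mu)}(\xi)$ satisfies $\xi\leq_{cx}\tau$ (item~\ref{s1}) and $\tau\leq S^\nu(\mu)\leq\nu$ (item~\ref{s2}, applied twice, with transitivity of $\leq$), so $\tau$ is admissible in the definition of $S^\nu(\xi)$ and item~\ref{s3} gives $S^\nu(\xi)\leq_{cx}\tau$. On the other hand, $\sigma:=S^\nu(\xi)$ satisfies $\xi\leq_{cx}\sigma$ (item~\ref{s1}) and, by the monotonicity just proved, $\sigma\leq S^\nu(\mu)$, so $\sigma$ is admissible in the definition of $S^{S^\nu(\mu)}(\xi)$ and item~\ref{s3} gives $\tau\leq_{cx}\sigma$. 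Thus $\sigma\leq_{cx}\tau\leq_{cx}\sigma$, and since $\leq_{cx}$ is a partial order (two measures with equal integrals against all convex functions coincide) we conclude $S^{S^\nu(\mu)}(\xi)=S^\nu(\xi)$.
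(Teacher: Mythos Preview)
Your proof is correct, but it follows a genuinely different route from the paper's. The paper proves the identity by a direct computation with the potential formula of \cref{thm:shadow_potential}: writing $\sigma=\mu-\xi$, it expands $P_{S^{S^\nu(\mu)}(\xi)}=P_{S^\nu(\mu)}-(P_{S^\nu(\mu)}-P_\xi)^c$, substitutes $P_{S^\nu(\mu)}=P_\nu-(P_\nu-P_\xi-P_\sigma)^c$, applies \cref{lem:equal_hulls} three times and \cref{lem:diff} once, and collapses the expression to $P_\nu-(P_\nu-P_\xi)^c=P_{S^\nu(\xi)}$. In particular the paper never isolates the monotonicity $S^\nu(\xi)\leq S^\nu(\mu)$ as a separate step. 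Your argument instead extracts that monotonicity from associativity (\cref{thm:shadow_assoc}) and then closes with a clean two-sided minimality comparison using item~\ref{s3} of \cref{defn:shadow}, avoiding any potential-function manipulation beyond what is already packaged in \cref{thm:shadow_assoc}. The paper's approach has the virtue of being self-contained at the level of the convex-hull lemmas (so logically independent of \cref{thm:shadow_assoc}), while yours is shorter and more conceptual, and as a by-product gives the useful monotonicity statement explicitly. Your aside that a direct potential attack ``reduces to an identity of the shape $(\Phi-G)^c=\Phi^c-G$ that is false'' is a bit off the mark: the paper's computation shows that the right combination of \cref{lem:equal_hulls} and \cref{lem:diff} does the job without any such false identity.
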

\begin{proof}
	Let $f$ be non-negative and convex. Then, $\int f d \xi \leq \int f d \mu \leq \int f dS^\nu(\mu) \leq \int f d \nu$ and both $\xi  \leq_E S^\nu(\mu)$ and $\xi \leq_E \nu$.
	Then, by \cref{thm:shadow_potential}, we have $P_{S^\nu(\xi)}=P_\nu-(P_\nu-P_{\xi})^c$. Applying \cref{thm:shadow_potential} to $P_{S^{S^\nu(\mu)}(\xi)}$, and writing $\sigma = \mu - \xi$ so that $P_\mu = P_\xi + P_\sigma$,
	\[ P_{S^{S^\nu(\mu)}(\xi)}
	=P_{S^\nu(\mu)}-(P_{S^\nu(\mu)}-P_{\xi})^c=\Big\{P_\nu-(P_\nu-P_{\xi}-P_{\sigma})^c\Big\}-\Big(P_\nu-(P_\nu-P_{\xi}-P_{\sigma})^c-P_{\xi}\Big)^c.
	\]
	Three applications of \cref{lem:equal_hulls} give that
	\[P_{S^{S^\nu(\mu)}(\xi)}=\Big\{P_\nu-((P_\nu-P_{\xi})^c-P_{\sigma})^c\Big\}-\Big((P_\nu-P_{\xi})^c-((P_\nu-P_{\xi})^c-P_{\sigma})^c\Big)^c. \]
	Finally, by \cref{lem:diff} (with $g=(P_\nu-P_{\xi})^c$ and $f=P_{\sigma}$) we have that $(P_\nu-P_{\xi})^c-((P_\nu-P_{\xi})^c-P_{\sigma})^c$ is convex and then
	\begin{align*}
	P_{S^{S^\nu(\mu)}(\xi)}
	& = \Big\{P_\nu-((P_\nu-P_{\xi})^c-P_{\sigma})^c\Big\}-\Big((P_\nu-P_{\xi})^c-((P_\nu-P_{\xi})^c-P_{\sigma})^c\Big) \\
	&=  P_\nu - (P_\nu - P_\xi)^c
	=P_{S^\nu(\xi)}.
	\end{align*}
\end{proof}

\begin{eg}\label{eg2}
	The assertion of \cref{prop:shadowSum} does not hold for $\xi,\mu,\nu\in\sM$ with $\xi\leq_{E}\mu\leq_{E}\nu$. To see this, let $\xi=\frac{1}{3}\delta_0$, $\mu=\frac{1}{3}(\delta_{-2}+\delta_2)$ and $\nu=\frac{1}{3}(\delta_{-2}+\delta_0+\delta_2)$. Then $S^\nu(\mu)=\mu$ and $S^{S^\nu(\mu)}(\xi)=S^\mu(\xi)=\frac{1}{6}(\delta_{-2}+\delta_2)\neq\xi=S^\nu(\xi)$.
\end{eg}

\section{Proofs}\label{sec:proofs}

\begin{proof}[Proof of \cref{lem:linear}]
	If $f^c \equiv -\infty$ then it is linear and we are done. Henceforth we exclude this case.
	
	Suppose $f^c$ is not a straight line on $(a,b)$. Then, by the convexity of $f^c$, for all $x\in(a,b)$ we have
	\[ f^c(x) <\frac{b-x}{b-a} f^c(a) + \frac{x-a}{b-a} f^c(b) = L^{f^c}_{a,b}(x) . \]
	
	Let $\eta = \inf_{u \in (a,b)} \{ f(u) -  L^{f^c}_{a,b}(u) \}$. If $\eta \geq 0$ then $f \geq L^{f^c}_{a,b}$ on $(a,b)$ and $f \geq f^c \vee L^{f^c}_{a,b}$, contradicting the maximality of $f^c$ as a convex minorant of $f$.
	
	Now suppose that $\eta<0$. Since $f$ is lower semi-continuous, $f-L^{f^c}_{a,b}$ is also lower semi-continuous, and therefore attains its infimum on $[a,b]$. Fix $z\in\arginf_{u\in[a,b]}\{f(u) -  L^{f^c}_{a,b}(u)\}$. Since $f(k)-L^{f^c}_{a,b}(k)=f(k)-f^c(k)\geq0$ for $k\in\{a,b\}$, $a,b\notin\arginf_{u\in[a,b]}\{f(u) -  L^{f^c}_{a,b}(u)\}$, and thus $z\in(a,b)$. Then since $f > f^{c}$ on $(a,b)$ we have $0 > \eta = f(z)-L^{f^c}_{a,b}(z)> f^c(z) -  L^{f^c}_{a,b}(z)$.
	Then $f^c \vee (L^{f^c}_{a,b} + \eta)$ is convex, is a minorant of $f$ and is strictly larger than $f^c$ (in particular at $z$) again contradicting the maximality of $f^c$ as a convex minorant of $f$.
\end{proof}
\begin{proof}[Proof of \cref{lem:diff}]
	Let $h=g-f$. We show that $\psi = g - h^c$ is convex.

If $h^c \equiv -\infty$ then $\psi \equiv +\infty$ which is convex. Henceforth we exclude this case.
	
	First note that, since $h^c(y)\leq h(y)$, $\psi(y)=g(y)-h^c(y)\geq f(y)$, $y\in\R$.
	
	Define
	\begin{equation*}
	\sA_=:=\{y:h(y)=h^c(y)\}\quad\textrm{and}\quad\sA_>:=\{y:h(y)>h^c(y)\}.
	\end{equation*}
	Then $\psi=f$ on $\sA_=$, while  $\psi>f$ on $\sA_>$.
	
	Recall that $\phi:\R\mapsto\R$ is convex if for all $x,y,z\in\R$, with $x\leq y\leq z$,
	\begin{equation*}
	\phi(y)\leq L^\phi_{x,z}(y).
	\end{equation*}
	
	Suppose $y\in\sA_=$. Then, for all $x\leq y\leq z$, since $f$ is convex and $\psi \geq f$,
	\begin{equation*}
	\psi(y)=f(y)\leq L^f_{x,z}(y)\leq L^\psi_{x,z}(y).
	\end{equation*}

	In the rest of the proof we take $y\in\sA_>$,  and $x,z \in \R$ with $-\infty < x \leq y \leq z < \infty$ and show that $\psi(y) \leq L^{\psi}_{x,z}(y)$.
	Let $\sB_y$ be the set of open intervals containing $y$ which are subsets of $\sA_>$. If $y \in \sA_>$ then, by continuity of $h$ and $h^c$, $\sB_y:=\{(a,b)\subseteq\R:y\ni(a,b)\subseteq\sA_>\}$ is non-empty. Moreover $\sB_y$ has a largest element: $\hat{\sB}_y:=\sup\sB_y$. Denote by $X(y)$ (resp. $Z(y)$) the left (resp. right) end-point of $\hat{\sB}_y$. By \cref{lem:linear}, we have that $h^c$ is linear on $(X(y),Z(y))$. Moreover, by continuity of $h$ and $h^c$, if $X(y)$ (resp. $Z(y)$) is finite, then $X(y)\in\sA_=$ (resp. $Z(y)\in\sA_=$). If both $X(y)$ and $Z(y)$ are finite then $h^c(y) = L^h_{X(y),Z(y)}(y)$. In general, $\hat{\sB} = (X(y),Z(y)) \subseteq \sA_>$.
	
	Suppose $-\infty < x\leq X(y)<Z(y)\leq z < \infty$.
	Then, since
	$g$ is convex and $h^c(y)=L^h_{X(y),Z(y)}(y)$,
	\[
	\psi(y)=g(y)-h^c(y)
	\leq L^g_{X(y),Z(y)}(y)-L^h_{X(y),Z(y)}(y)
	=L^f_{X(y),Z(y)}(y) \]
	and then
	\[ \psi(y)  \leq L^f_{X(y),Z(y)}(y) \leq L^{L^f_{x,z}}_{X(y),Z(y)}(y)
	=L^f_{x,z}(y)\nonumber\\
	\leq L^\psi_{x,z}(y) \]
	by the convexity of $f$ (and hence $f \leq L^f_{x,z}$ on $[x,z]$) and the fact that $f\leq \psi$.
	
	Suppose $X(y)\leq x\leq y\leq z\leq Z(y)$. Note that we allow $X(y)=-\infty$ (resp. $Z(y)=\infty$), but in that case $x>X(y)=-\infty$ (resp. $z<Z(y)=\infty$). By Lemma~\ref{lem:linear}, $h^c$ is linear on $(x,z)$, and therefore $h^c(y)=L^{h^c}_{x,z}(y)$. Using convexity of $g$ on $\R$ we conclude that
	\[
	\psi(y)=g(y)-h^c(y)
	\leq L^g_{x,z}(y) - L^{h^c}_{x,z}(y)
	=L^\psi_{x,z}(y).
	\]
	
	Suppose $X(y)\leq x\leq y<Z(y)\leq z$. 
	(The case $x\leq X(y)<y\leq z\leq Z(y)$ follows by symmetry.) Since $z<\infty$ we have that $Z(y)<\infty$, but $X(y)$ may be finite or infinite.
	In this case $h^c$ is also linear on $(x,Z(y))$ and therefore $h^c(y)=L^{h^c}_{x,Z(y)}(y)$. Then
	\begin{equation}
	\psi(y)=g(y)-h^c(y) \leq L^g_{x,Z(y)}(y)-L^{h^c}_{x,Z(y)}(y) = L^\psi_{x,Z(y)}(y),
	\label{eq:A}
	\end{equation}
	where the inequality follows from the convexity of $g$ on $\R$. Now note that, since $f$ is convex and $f\leq \psi$,
	$$
	\psi(Z(y))=g(Z(y))-h(Z(y))=f(Z(y))\leq L^f_{x,z}(Z(y))\leq L^\psi_{x,z}(Z(y)),
	$$
	from which we conclude that $L^\psi_{x,Z(y)}(y)\leq L^\psi_{x,z}(y)$, and then combining with \eqref{eq:A}, $\psi(y) \leq L^\psi_{x,z}(y)$. This finishes the proof.
\end{proof}
\begin{proof}[Proof of \cref{lem:equal_hulls}]
	First, since $f\geq f^c$, $(f-g)^c\geq (f^c-g)^c$. On the other hand, we have $(f-g)^c\leq (f-g)$ and therefore $(f-g)^c+g\leq f$. Since the sum of two convex functions is convex, $(f-g)^c+g$ is also convex. Hence, $(f-g)^c+g\leq f^c$, and therefore $(f-g)^c\leq (f^c-g)$. Since $(f^c-g)^c$ is the largest convex function dominated by $(f^c-g)$, $(f-g)^c\leq (f^c-g)^c$. It follows that $(f-g)^c= (f^c-g)^c$.
\end{proof}

\begin{proof}[Proof of \cref{CSpotential}]
	
	Since $g\in\sD(a,b)$ and $f\in\sD(\alpha,\beta)$ with $g\geq f$, we have that
	\begin{align*}
	0\leq\lim_{k\to\infty}\{g(k)-f(k)\}&=\lim_{k\to\infty}\{g(k)-(ak-b)-f(k)+(\alpha k-\beta)+(a-\alpha)k-(b-\beta)\}\\
	&=\lim_{k\to\infty}\{(a-\alpha)k-(b-\beta)\}=\lim_{k\to\infty}h(k),
	\end{align*}
	and therefore $a\geq\alpha$. Also, if $\alpha = a$ then $\beta \geq b$.
	
	

	Now suppose $f \leq g$ and $g-f \geq h$.
	Then $g - f\geq h^+$ and since $h^+$ is convex, we have that $(g-f)\geq (g-f)^c\geq h^+$. Then, $\lim_{\lvert k\lvert\to\infty}\{g(k)-f(k)-  h^+(k) \}=0$, and it follows that $(g-f)^c\in\sD(a-\alpha,b-\beta)$.
\end{proof}

\bibliographystyle{plainnat}

\end{document}